\documentclass[final,hidelinks,onefignum,onetabnum]{siamart251216}

\usepackage[T1]{fontenc}
\usepackage[utf8]{inputenc}
\usepackage{amssymb,mathtools,bm}
\usepackage{algpseudocode}
\usepackage{booktabs}
\usepackage{enumitem}
\usepackage{graphicx}
\usepackage{flafter}
\usepackage[section]{placeins}
\usepackage{subcaption}
\usepackage{tabularx}
\usepackage{microtype}
\usepackage{xurl}
\newcolumntype{L}[1]{>{\raggedright\arraybackslash}p{#1}}
\newcolumntype{Y}{>{\raggedright\arraybackslash}X}

\setlist[itemize]{leftmargin=1.8em}
\setlist[enumerate]{leftmargin=1.8em}

\newsiamremark{assumption}{Assumption}
\newsiamremark{remark}{Remark}
\crefname{assumption}{Assumption}{Assumptions}
\crefname{remark}{Remark}{Remarks}

\newcommand{\RR}{\mathbb{R}}
\newcommand{\EE}{\mathbb{E}}
\newcommand{\ip}[2]{\langle #1,#2\rangle}
\newcommand{\argmin}{\mathop{\mathrm{argmin}}}
\newcommand{\dist}{\mathop{\mathrm{dist}}}
\newcommand{\dom}{\mathop{\mathrm{dom}}}
\newcommand{\range}{\mathcal{R}}
\newcommand{\supp}{\mathop{\mathrm{supp}}}
\newcommand{\KL}{\mathop{\mathrm{KL}}}
\newcommand{\relu}{\mathop{\mathrm{ReLU}}}
\newcommand{\softmax}{\mathop{\mathrm{softmax}}}

\newcommand{\ind}{\iota}
\newcommand{\cF}{\mathcal{F}}
\newcommand{\cT}{\mathcal{T}}
\newcommand{\cX}{\mathcal{X}}

\newcommand{\one}{\mathbf{1}}
\newcommand{\norm}[1]{\left\|#1\right\|}
\newcommand{\relint}{\operatorname{relint}}

\usepackage{xcolor}
\newcommand{\zy}[1]{\textcolor{black}{#1}}
\newcommand{\vl}[1]{\textcolor{black}{#1}} 
\newcommand{\hsiupdate}[1]{\textcolor{black}{#1}}

\headers{Randomized Row-Action Methods}{V. Leplat, Z. Dong, and J. Yin}

\title{A Unified Framework for Randomized Row-Action Methods
for Structured Data Fitting and Constraint Geometry}

\author{
Valentin Leplat\thanks{
Department of Mathematics and Operational Research, Universit\'e de Mons, Rue de Houdain 9, 7000 Mons, Belgium
(\email{Valentin.LEPLAT@umons.ac.be}).
}
\and
Zeyu Dong\thanks{Shanghai Research Institute for Intelligent Autonomous Systems, Tongji University,
Shanghai, 200092, China
  (\email{dongzeyu@tongji.edu.cn}).}
\and
Junfeng Yin\thanks{Key Laboratory of Intelligent Computing and Applications (Ministry of Education), School of Mathematical Sciences,
Tongji University,
Shanghai, 200092, China
  (\email{yinjf@tongji.edu.cn}).}
}

\ifpdf
\hypersetup{
  pdftitle={A Unified Framework for Randomized Row-Action Methods for Structured Data Fitting and Constraint Geometry},
  pdfauthor={Valentin Leplat, Zeyu Dong, and Junfeng Yin}
}
\fi

\begin{document}
\maketitle

\begin{abstract}
\vl{We study randomized row-action methods for problems with linear observations, convex constraints, and non-Euclidean geometry. Our starting point is a unified formulation in which the data-fitting term acts in observation space, while the feasible set and the regularizer encode structure in variable space. It covers constrained least squares, nonnegative and simplex-constrained models, regularized data fitting, and equality-constrained Bregman selection problems. The structure of these problems leads naturally to two algorithmic branches. The first uses primal Bregman-proximal row steps for soft data-fitting problems. The second uses dual Bregman--Kaczmarz steps for problems with hard equality constraints.} We first derive a common one-step inequality. We then give deterministic and stochastic convergence results for the soft branch under relative smoothness. For constrained least squares, the normalized row oracle has exact first- and second-moment identities. They yield an expected ergodic bound in the inconsistent case, an $O(1/K)$ bound in the interpolating case, and linear convergence of the actual iterates under a residual error bound. For equality-constrained selectors, we prove linear convergence in expected Bregman distance under a residual-to-Bregman error bound. We recall global sufficient conditions for this bound and derive local constants for the nonnegative quadratic geometry and the entropy geometry on the simplex. 
\vl{Numerical experiments illustrate the noisy stepsize trade-off and the local entropy selector theory, and show that entropic row-action updates can recover interpretable factors in a simplex-structured nonnegative matrix factorization problem.}
\end{abstract}

\begin{keywords}
randomized Kaczmarz, row-action methods, constrained least squares, Bregman distance, mirror map, nonnegative constraints, simplex constraints, matrix factorization
\end{keywords}


\section{Introduction}
Many inverse problems and data-fitting models have the form
\begin{equation}
\label{eq:intro-model}
    \min_{x\in C} D(Ax,y)+r(x),
\end{equation}
where $A\in\RR^{m\times n}$ is a linear observation operator. The set $C$ can impose nonnegativity, box constraints, or simplex constraints. The function $r$ can promote sparsity or another preferred structure. When $m$ is large, a full iteration may be expensive because it uses all rows of $A$. Row-action methods reduce this cost. At each iteration, they use one row or one small block of rows.

The Kaczmarz method is the classical example. For a consistent system $Ax=b$, it projects the current iterate onto one row hyperplane. Randomized sampling gives a linear expected rate in the consistent case \cite{Gordon1970ART,StrohmerVershynin2009}. Many extensions have been proposed for noisy systems, blocks, acceleration, constraints, and alternative sampling rules; see, for example, \cite{Needell2010Noisy,NeedellTropp2014,GowerRichtarik2015,ZouziasFreris2013,LeventhalLewis2010,TondjiNecoaraLorenz2024}. These methods are attractive because a single step is cheap and can use streaming access to the rows of $A$.

Structural constraints create a second difficulty. A row projection onto $a_i^\top x=b_i$ does not preserve nonnegativity or simplex feasibility. Projected row-action methods address this issue directly in the primal space. They combine a local row correction with a projection onto a convex set \cite{Popa2008CKE,WuXiang2020PRK}. Bregman methods take another route. They change the geometry of the update or work with a dual variable and a mirror map. This produces sparse, nonnegative, or entropy-based iterates depending on the chosen potential \cite{Bregman1967,schopfer2019linear,Schpfer2022ExtendedRK,TondjiNecoaraLorenz2024,DongXiaoYinYin2026,Dong2025SHBK}.

\vl{These lines of work are closely related, but they address two different targets that should be distinguished.} In a soft data-fitting model, the solution is a constrained minimizer and the optimal residual may be nonzero. A sampled row gives a local stochastic model of the objective. It is not a hard equation that should be satisfied exactly. In an equality-constrained selector model, the equations are imposed exactly and a strongly convex function selects one feasible point. A row update can then be viewed as a Bregman projection or as a coordinate step on the Fenchel dual.

The distinction matters in the inconsistent case. Suppose that
\[
    x^\star\in\argmin_{x\in C}\frac12\norm{Ax-y}_2^2
\]
has a nonzero residual. A feasibility theorem for $Ax=y$ cannot describe convergence to $x^\star$. Conversely, a stochastic projected-gradient argument does not use the dual selector structure available when the equations are exact. The algorithms share a row-action principle, but they need different assumptions and different proofs.

This paper makes this split explicit. We use \eqref{eq:intro-model} as a modeling umbrella and show in detail how several existing models fit it. We then develop two general algorithmic branches. The soft branch uses a primal Bregman-proximal row step. The hard branch uses a dual Bregman--Kaczmarz step followed by a geometry-dependent mirror map. The two branches coincide in some Euclidean consistent cases, but they are not interchangeable in general.

\vl{The unified formulation gives a common modeling framework, while the appropriate row-action update depends on the structure of the problem. The primal branch applies naturally when the data term decomposes by rows or blocks and the corresponding Bregman-proximal subproblem can be solved efficiently. Hard equality models with inconsistent observations can first use a range-correction step and then apply the selector update. In this way, the framework makes the common structure explicit and shows which row-action realization is suited to each model.}

A main motivation comes from low-rank matrix and tensor approximation. Block-coordinate methods repeatedly solve constrained linear subproblems. For example, fixing one factor in nonnegative matrix factorization gives many nonnegative least-squares problems. Simplex constraints also arise when the factors represent abundances or mixture weights. Row-action updates may be useful when these subproblems are large, sparse, or accessed in a streaming way. The same idea can later be applied to nonlinear factorizations in which an entrywise map is placed on top of a low-rank product.

\paragraph{Contribution and outline}
Our contributions are as follows.
\begin{enumerate}
    \item \vl{We introduce a unified formulation for structured row-action problems that covers constrained least squares, regularized fitting, nonnegative and simplex models, equality selectors, and range-corrected selector models.}
    \item \vl{Within this formulation, we distinguish two natural algorithmic branches. The soft branch uses primal Bregman-proximal row steps and allows a nonzero optimal residual. The hard branch uses dual Bregman--Kaczmarz updates and exact linear equations.}
    \item We establish a convergence hierarchy for the soft branch. A common Bregman-proximal inequality leads to deterministic and stochastic results under relative smoothness. For constrained least squares, exact oracle identities give sharper bounds.
    \item In the interpolating constrained least-squares case, we prove linear convergence of the actual projected iterates under a residual error bound. The classical randomized Kaczmarz rate is recovered as a special case.
    \item For equality-constrained selectors, we prove an expected linear contraction under a residual-to-Bregman error bound. We recall global sufficient conditions and give local constants for the nonnegative quadratic and entropy simplex geometries.
    \item \vl{We complement the theory with three numerical experiments on noisy NNLS, an entropy selector on the simplex, and a simplex-structured NMF problem. The experiments illustrate the predicted behavior and show how the proposed row-action updates can be used in a structured factorization model.}
\end{enumerate}
Section~\ref{sec:related} reviews the related work and positions the contribution. Section~\ref{sec:prelim} gives the notation and the basic Bregman tools. Section~\ref{sec:framework} presents the master formulation and the two algorithms. Section~\ref{sec:theory} gives the convergence analysis. Section~\ref{sec:numerics} reports the numerical experiments. Section~\ref{sec:conclusion} concludes the paper. Technical proofs and additional implementation details are collected in the appendices.

\section{Related work and positioning}
\label{sec:related}
This section reviews the three lines of work that are closest to the present paper. We then explain the role of the unified formulation.

\subsection{Classical and randomized Kaczmarz methods}
The Kaczmarz method was introduced as an iterative projection method for linear systems \cite{Kaczmarz1937} and later became a standard tool in algebraic reconstruction \cite{Gordon1970ART}. Strohmer and Vershynin showed that norm-proportional randomized sampling gives an expected linear rate for consistent systems \cite{StrohmerVershynin2009}. The rate depends on the scaled condition number $\norm{A}_F^2/(\sigma_{\min}^+(A))^2$.

The inconsistent case led to several different approaches. Needell showed that the basic randomized method converges to a noise-dependent neighborhood \cite{Needell2010Noisy}. Randomized extended Kaczmarz introduces an auxiliary correction in observation space and converges to a least-squares solution \cite{ZouziasFreris2013}. The sketch-and-project viewpoint places many randomized linear solvers in one Euclidean framework and clarifies the role of the sampling distribution \cite{GowerRichtarik2015}. Block methods reduce communication and can exploit groups of compatible rows \cite{NeedellTropp2014}. More recent work studies acceleration, momentum, adaptive sampling, and beyond-Krylov behavior \cite{Morshed2019ASKM,AldermanLuikartMarshall2024,DerezinskiNeedellRebrovaYang2025}.

These results mainly concern unconstrained linear systems or least-squares problems. The structure of the unknown is either absent or handled by a separate device.

\subsection{Projected and constrained row-action methods}
When $x$ must belong to a closed convex set, a natural update is a row correction followed by a projection. This idea appears in constrained ART and projected Kaczmarz methods \cite{Popa2008CKE,WuXiang2020PRK}. The consistent case is particularly favorable because the solution belongs to every sampled hyperplane. One can then combine the exact row identity with nonexpansiveness of the projection.

The noisy constrained case is more delicate. The minimizer of
\[
    \min_{x\in C}\frac12\norm{Ax-y}_2^2
\]
need not satisfy any sampled row equation. A fixed Kaczmarz step therefore keeps a nonvanishing stochastic perturbation at the optimum. This is the source of the stepsize-dependent neighborhood seen in noisy randomized row methods. To converge to the constrained minimizer, one needs a stochastic-optimization argument, a decreasing stepsize, variance reduction, or a correction mechanism adapted to the constraint.

The present paper separates this soft problem from exact feasibility. Our least-squares result is stated directly for the constrained minimizer. It does not assume that the optimal residual vanishes.

\subsection{Bregman--Kaczmarz and structural selectors}
Bregman projections replace Euclidean distance by a convex geometry \cite{Bregman1967}. In row-action form, one may update a dual vector and recover the primal variable through $x=\nabla f^*(u)$. With a quadratic $f$, this is the classical Kaczmarz method. With
\[
    f(x)=\lambda\norm{x}_1+\frac12\norm{x}_2^2,
\]
the terminal map is soft-thresholding and the method selects a sparse feasible point. Linear convergence of this randomized sparse Kaczmarz method was established in \cite{schopfer2019linear}. Extended and accelerated variants were later developed for inconsistent data and block updates \cite{Schpfer2022ExtendedRK,TondjiNecoaraLorenz2024,DongXiaoYinYin2026}.

This branch is naturally written as
\[
    \min_x f(x)\qquad\text{subject to}\qquad Ax=b.
\]
Strong convexity gives a unique selected minimizer even when the affine system has many feasible points. The proof is controlled by a residual-to-Bregman error bound. Existing global results use regularity properties of the subdifferential. For extended-valued geometries such as the nonnegative quadratic potential or the entropy on the simplex, local constants are often easier to obtain than a global spectral formula. We make this distinction explicit.

\subsection{Positioning of the present work}
\vl{The unified formulation isolates three ingredients shared by several row-action methods: a linear observation map, a data term in observation space, and a geometry or constraint in variable space. The target problem then determines the row-action update and the convergence argument.}

At the algorithmic level, the soft and hard branches use different updates. At the theoretical level, they use different target properties. The soft branch studies objective gaps to a constrained minimizer. The hard branch studies feasibility residuals and Bregman distance to a selected feasible point. This split avoids importing an exact-feasibility theorem into a noisy optimization problem.

\vl{The resulting framework is useful in two ways. First, it places projected row descent, mirror row descent, and Bregman selectors in one model. Second, it makes the conditions for an efficient row-action realization explicit. The data term should admit local row or block access, the corresponding Bregman-proximal map should be tractable, and structural selectors require suitable residual-to-Bregman control. These conditions also show where additional algorithmic ingredients are needed.}

\section{Preliminaries}
\label{sec:prelim}
We collect the basic definitions used in the two branches.

\subsection{Convex constraints and optimality}
Let $C\subseteq\RR^n$ be nonempty, closed, and convex. Its indicator is
\[
    \ind_C(x)=
    \begin{cases}
    0, & x\in C,\\
    +\infty, & x\notin C.
    \end{cases}
\]
The normal cone at $x\in C$ is
\[
    N_C(x)=\{v\in\RR^n:\ip{v}{z-x}\le0\ \text{for all }z\in C\}.
\]
For a differentiable convex function $F$, a point $x^\star\in C$ solves $\min_{x\in C}F(x)$ if and only if
\[
    0\in\nabla F(x^\star)+N_C(x^\star).
\]
The Euclidean projection is denoted by $P_C$. It is nonexpansive:
\[
    \norm{P_C(x)-P_C(z)}_2\le\norm{x-z}_2.
\]

\subsection{Bregman distances and mirror maps}
Let $h$ be differentiable on the relative interior of its domain. Its Bregman distance is
\begin{equation}
\label{eq:bregman-def}
    D_h(x,z)=h(x)-h(z)-\ip{\nabla h(z)}{x-z}.
\end{equation}
The distance is generally not symmetric. If $h$ is $\mu_h$-strongly convex with respect to a norm $\norm{\cdot}$, then
\begin{equation}
\label{eq:strong-bregman}
    D_h(x,z)\ge\frac{\mu_h}{2}\norm{x-z}^2.
\end{equation}
The three-point identity is
\begin{equation}
\label{eq:three-point}
    \ip{\nabla h(z)-\nabla h(y)}{y-x}
    =D_h(x,z)-D_h(x,y)-D_h(y,z).
\end{equation}

For a proper closed convex function $f$, the Fenchel conjugate is
\[
    f^*(u)=\sup_x\{\ip{u}{x}-f(x)\}.
\]
If $f$ is $\mu_f$-strongly convex, then $f^*$ is differentiable and $\nabla f^*$ is $1/\mu_f$-Lipschitz. The map $u\mapsto\nabla f^*(u)$ is the mirror map used in the hard branch.

For a possibly nonsmooth $f$, $u\in\partial f(x)$, and a reference point $z$, we use the generalized Bregman distance
\begin{equation}
\label{eq:generalized-bregman-prelim}
    D_f^u(x,z)=f(z)-f(x)-\ip{u}{z-x}.
\end{equation}
Fenchel equality gives the equivalent form \textcolor{black}{$D_f^u(x,z)=f^*(u)-\ip{u}{z}+f(z)$}.

Two geometries will be used repeatedly. For \textcolor{black}{$h(x)=\tfrac12\norm{x}_2^2$},
we have $D_h(x,z)=\frac12\norm{x-z}_2^2$. On the simplex \textcolor{black}{$\Delta_n=\{x\in\RR_+^n:\one^\top x=1\}$},
the negative entropy \textcolor{black}{$h(x)=\sum_{j=1}^n x_j\log x_j+\ind_{\Delta_n}(x)$}
induces the Kullback--Leibler distance and the mirror map $\softmax$.

On the simplex, the negative entropy is $1$-strongly convex with
respect to the Euclidean norm. Moreover, for $x,\hat x\in\relint(\Delta_n)$,
the generalized Bregman distance used in the hard branch satisfies \textcolor{black}{\mbox{$D_f^u(x,\hat x)=\KL(\hat x\Vert x)$}}.

\subsection{Rows, blocks, and sampling}
The rows of $A$ are denoted by $a_i^\top$, $i=1,\ldots,m$. We assume that zero rows are removed. The Frobenius norm is \textcolor{black}{\mbox{$\norm{A}_F^2=\sum_{i=1}^m\norm{a_i}_2^2$}}.
Norm-proportional sampling uses
\begin{equation}
\label{eq:norm-sampling-prelim}
    p_i=\frac{\norm{a_i}_2^2}{\norm{A}_F^2}.
\end{equation}
The history up to iteration $k$ is denoted by $\cF_k$. All conditional expectations are taken with respect to this filtration. The row formulas extend to blocks by replacing $a_i^\top$ with a submatrix and the scalar row model with a local block model.

\section{Unified formulation and two algorithmic branches}
\label{sec:framework}
This section has four goals. 
\vl{We introduce the master formulation, describe the models it covers, state when efficient row-action updates are available, and present the two algorithmic branches.}

\subsection{The master formulation}
We consider
\begin{equation}
\label{eq:master}
    \min_{x\in C} D(Ax,y)+r(x).
\end{equation}
Here $C\subseteq\RR^n$ is nonempty, closed, and convex. The function $D(\cdot,y)$ measures the fit in observation space, while $r$ acts directly on the unknown. Equivalently,
\begin{equation}
\label{eq:master-composite}
    \min_{x\in\RR^n} D(Ax,y)+R(x),
    \qquad R(x):=r(x)+\ind_C(x).
\end{equation}
This second form is convenient for proximal and Bregman-proximal updates.

A row-action method needs additional structure. In the simplest case,
\begin{equation}
\label{eq:row-separable}
    D(Ax,y)=\sum_{i=1}^m d_i(a_i^\top x,y_i).
\end{equation}
More generally, let $I_1,\ldots,I_s$ be a partition of the row indices. A block decomposition has the form
\[
    D(Ax,y)=\sum_{\ell=1}^s D_{I_\ell}(A_{I_\ell,:}x,y_{I_\ell}).
\]
\vl{For the row-action algorithms below, we assume such a row or block decomposition.}

\subsection{Representative special cases}
Table~\ref{tab:special-cases} summarizes the main cases. The last column is important. It indicates the natural algorithmic branch, not only the modeling reduction.

\begin{table}[t]
\centering
\caption{Representative instances of the master formulation \eqref{eq:master}.}
\label{tab:special-cases}
\small
\begin{tabularx}{\textwidth}{@{}L{0.22\textwidth}L{0.26\textwidth}L{0.24\textwidth}Y@{}}
\toprule
Model & Data term & Variable-space structure & Natural branch \\
\midrule
Constrained least squares & $\frac12\norm{Ax-y}_2^2$ & $x\in C$ & Soft primal branch \\
Nonnegative least squares & $\frac12\norm{Ax-y}_2^2$ & $C=\RR_+^n$ & Soft primal branch \\
Simplex least squares & $\frac12\norm{Ax-y}_2^2$ & $C=\Delta_n$ & Soft primal branch (Euclidean or entropic geometry) \\
Regularized data fitting & row-decomposable $D$ & $r\neq0$, $x\in C$ & Soft primal branch \\
Equality selector & $\ind_{\{y\}}(Ax)$ & strongly convex $f$ & Hard equality branch \\
Sparse selector & $\ind_{\{y\}}(Ax)$ & $f=\lambda\norm{x}_1+\frac12\norm{x}_2^2$ & Hard equality branch \\
Nonnegative selector & $\ind_{\{y\}}(Ax)$ & $f=\ind_{\RR_+^n}+\frac12\norm{x}_2^2$ & Hard equality branch \\
Entropy selector & $\ind_{\{y\}}(Ax)$ & entropy on $\Delta_n$ & Hard equality branch \\
Range-corrected selector & auxiliary fit in $\range(A)$ & strongly convex selector & Extended hard branch \\
\bottomrule
\end{tabularx}
\end{table}

\paragraph{Constrained and regularized data fitting}
With
\[
    D(Ax,y)=\frac12\norm{Ax-y}_2^2,
\]
problem \eqref{eq:master} gives constrained least squares. The choices $C=\RR_+^n$ and $C=\Delta_n$ give NNLS and simplex-constrained least squares. A nonzero $r$ adds sparsity, smoothness, or another regularizer. These models remain meaningful when the system is inconsistent. The target is the constrained minimizer, not a feasible point of $Ax=y$.

The same structure appears in block-coordinate methods. Fixing all but one factor in a matrix or tensor approximation often gives a family of constrained linear problems. Depending on the factor, the feasible set can be the nonnegative orthant, a simplex, a box, or an affine slice.

\paragraph{Equality-constrained selectors}
Taking $D(u,y)=\ind_{\{y\}}(u)$ gives
\begin{equation}
\label{eq:selector}
    \min_x f(x)\qquad\text{subject to}\qquad Ax=y,
\end{equation}
where $f$ includes both the selector and the structural constraint. If the system is feasible and $f$ is strongly convex, the selected minimizer is unique even when the affine system has many solutions.
Different choices of $f$ lead to the identity, soft-thresholding, ReLU, or softmax terminal maps.

\paragraph{Range-corrected selectors}
Extended Bregman--Kaczmarz methods can be written as
\begin{equation}
\label{eq:range-corrected}
    \min_x f(x)\quad\text{subject to}\quad Ax=\hat y,
    \qquad
    \hat y\in\argmin_{z\in\range(A)}g^*(y-z).
\end{equation}
\vl{This fits the master viewpoint after introducing an auxiliary observation-space variable. Its row-action realization uses an additional correction step, so we include it in the modeling framework while keeping the main convergence theorem focused on Algorithm~\ref{alg:hard}.}

\paragraph{Simplex feasibility}
The simplex can be enforced in two ways. In a primal Euclidean realization, one projects onto $\Delta_n$. In an entropic realization, the mirror map already takes values in $\Delta_n$. The model is the same, but the local geometry and the update are different.

\subsection{From a unified formulation to two algorithmic realizations}
The soft and hard models differ at the target level. Under the usual convexity and qualification assumptions, a minimizer of the soft data-fitting problem satisfies
\begin{equation}
\label{eq:soft-optimality}
    0\in A^\top\partial_1D(Ax^\star,y)+\partial r(x^\star)+N_C(x^\star).
\end{equation}
There is no reason for $Ax^\star=y$. A sampled row therefore gives a stochastic direction for the objective.

In the selector problem \eqref{eq:selector}, every row equation is a hard constraint. A row update can be derived from a Bregman projection or a coordinate update on the Fenchel dual. The selector geometry determines which feasible point is recovered.

This leads to two branches.
\begin{itemize}
    \item \textbf{Branch I: soft constrained data fitting.} Use a primal row oracle and a Bregman-proximal step. This branch allows a nonzero optimal residual.
    \item \textbf{Branch II: hard equality selectors.} Update a dual variable row by row and recover the primal iterate through $\nabla f^*$. This branch uses exact feasibility in its analysis.
\end{itemize}

\vl{The same formulation also shows when additional ingredients are needed for an efficient row-action method.}
\begin{enumerate}
    \item \vl{Local row or block access requires a corresponding decomposition of $D$; otherwise the oracle may still involve the full matrix.}
    \item \vl{The Bregman-proximal subproblem should be inexpensive enough to preserve the advantage of a local row update.}
    \item \vl{For underdetermined nonnegative selectors, the local result in Section~\ref{subsec:local-geometries} gives a computable contraction once the iterates enter the stated neighborhood. A global computable constant may require additional regularity.}
\end{enumerate}

\subsection{Branch I: primal Bregman-proximal row action}
Let $h$ be differentiable and strongly convex. Given a row oracle $s_k$, a stepsize $\eta_k>0$, and $R=r+\ind_C$, define
\begin{equation}
\label{eq:soft-update}
    x_{k+1}
    =
    \argmin_{x\in\RR^n}
    \left\{
        \eta_k\ip{s_k}{x}
        +D_h(x,x_k)
        +\eta_k R(x)
    \right\}.
\end{equation}
The row oracle comes from the data term. The Bregman distance controls the geometry. The function $R$ enforces the constraint and the regularization.

\begin{algorithm}[H]
\caption{Primal Bregman-proximal row-action method}
\label{alg:soft}
\begin{algorithmic}[1]
\Require $x_0\in\dom R$, stepsizes $\{\eta_k\}$, row or block sampling rule
\For{$k=0,1,\ldots$}
    \State Sample a row or block index $i_k$
    \State Build a local oracle $s_k=s_{i_k}(x_k)$ from the data term
    \State Compute $x_{k+1}$ from \eqref{eq:soft-update}
\EndFor
\end{algorithmic}
\end{algorithm}

For $h(x)=\frac12\norm{x}_2^2$ and $r=0$, the update reduces to \textcolor{black}{\mbox{$x_{k+1}=P_C(x_k-\eta_ks_k)$}}.
For least squares, the normalized row oracle is
\begin{equation}
\label{eq:normalized-row-oracle}
    g_i(x)=\frac{a_i^\top x-y_i}{\norm{a_i}_2^2}a_i.
\end{equation}
With entropy geometry on the simplex, \eqref{eq:soft-update} becomes a multiplicative update followed by normalization. This normalization is the closed-form Bregman-proximal map on the simplex, not an additional Euclidean projection.

The generic update also covers blocks. The local oracle can be obtained from $A_{I,:}$ and $y_I$, and the Bregman-proximal step remains in the variable space. The theory below is written for rows to keep the notation light.

\subsection{Branch II: dual Bregman--Kaczmarz selectors}
Consider \eqref{eq:selector}, where $f$ is proper, closed, and $\mu_f$-strongly convex. Its Fenchel dual can be written as
\[
    \max_{v\in\RR^m} y^\top v-f^*(A^\top v).
\]
Set $u=A^\top v$. The primal variable is recovered through \textcolor{black}{\mbox{$x=\nabla f^*(u)$}}.
A relaxed row step is
\begin{equation}
\label{eq:selector-update}
    u_{k+1}=u_k-\eta_k g_{i_k}(x_k),
    \qquad
    x_{k+1}=\nabla f^*(u_{k+1}),
\end{equation}
where $g_i$ is defined by \eqref{eq:normalized-row-oracle}, with $y$ replaced by $b$ when the system is written as $Ax=b$.

\begin{algorithm}[H]
\caption{Relaxed randomized Bregman--Kaczmarz method}
\label{alg:hard}
\begin{algorithmic}[1]
\Require $u_0\in\range(A^\top)$, $x_0=\nabla f^*(u_0)$, stepsizes $\{\eta_k\}$
\For{$k=0,1,\ldots$}
    \State Sample $i_k$ with probability $p_i=\norm{a_i}_2^2/\norm{A}_F^2$
    \State $g_{i_k}(x_k)\gets (a_{i_k}^\top x_k-b_{i_k})a_{i_k}/\norm{a_{i_k}}_2^2$
    \State $u_{k+1}\gets u_k-\eta_kg_{i_k}(x_k)$
    \State $x_{k+1}\gets\nabla f^*(u_{k+1})$
\EndFor
\end{algorithmic}
\end{algorithm}

The terminal map determines the structure:
\begin{align*}
    f(x)=\tfrac12\norm{x}_2^2
    &\quad\Longrightarrow\quad \nabla f^*(u)=u,\\
    f(x)=\lambda\norm{x}_1+\tfrac12\norm{x}_2^2
    &\quad\Longrightarrow\quad \nabla f^*(u)=S_\lambda(u),\\
    f(x)=\ind_{\RR_+^n}(x)+\tfrac12\norm{x}_2^2
    &\quad\Longrightarrow\quad \nabla f^*(u)=\relu(u),\\
    f(x)=\sum_jx_j\log x_j+\ind_{\Delta_n}(x)
    &\quad\Longrightarrow\quad \nabla f^*(u)=\softmax(u).
\end{align*}
Thus the identity, soft-thresholding, ReLU, and softmax maps appear in the same dual row-action scheme.

An exact Bregman projection onto one row hyperplane can also be obtained by a one-dimensional line search. The relaxed update \eqref{eq:selector-update} avoids this line search. In Euclidean geometry, it gives the exact row projection for $\eta_k=1$ and the usual relaxed Kaczmarz step otherwise. It remains simple for general mirror maps.

\section{Convergence analysis}
\label{sec:theory}
The analysis follows the two algorithmic branches. We begin with the common inequality behind the primal Bregman-proximal step. We then consider the soft branch at three levels.
\begin{itemize}
    \item Theorem~\ref{thm:det-soft} gives a deterministic $O(1/K)$ reference result under relative smoothness.
    \item Theorem~\ref{thm:stoch-soft} treats an unbiased stochastic row oracle and gives the usual deterministic-plus-variance bound.
    \item Theorems~\ref{thm:ls-global} and~\ref{thm:ls-linear} use the exact structure of the least-squares row oracle. They cover, respectively, noisy constrained fitting and the interpolating regime.
\end{itemize}
We then turn to the hard equality branch. Theorem~\ref{thm:selector-contraction} converts a residual-to-Bregman error bound into linear convergence. Theorem~\ref{thm:global-eb-sufficient} recalls global sufficient conditions from the extended Bregman--Kaczmarz literature. Finally, Propositions~\ref{prop:local-nonnegative} and~\ref{prop:local-simplex} give local constants for the ReLU and softmax geometries.

\paragraph{Recovery of existing results}
The analysis below also provides several consistency checks with the literature. In Euclidean geometry, the soft branch becomes a projected randomized row method. If the constraint is removed and the linear system is consistent, the update and its rate reduce to the classical randomized Kaczmarz method. In the hard branch, a quadratic potential gives the same Euclidean method, while an elastic-net potential gives randomized sparse Kaczmarz. We make these reductions explicit after the corresponding theorems; see also~\cite{StrohmerVershynin2009,WuXiang2020PRK,schopfer2019linear}.

\subsection{A common Bregman-proximal inequality}
Assume throughout this subsection and the next that $h$ is differentiable and $\mu_h$-strongly convex with respect to a norm $\norm{\cdot}$. Let $R$ be proper, closed, and convex, and assume that the subproblem \eqref{eq:soft-update} has a solution.

\begin{proposition}[One-step Bregman-proximal inequalities]
\label{prop:one-step}
For every $x\in\dom R$, the update \eqref{eq:soft-update} satisfies
\begin{align}
\label{eq:exact-one-step}
    &\eta_k\ip{s_k}{x_{k+1}-x}
    +\eta_k\bigl(R(x_{k+1})-R(x)\bigr)\\
    &\qquad\le
    D_h(x,x_k)-D_h(x,x_{k+1})-D_h(x_{k+1},x_k).\notag
\end{align}
It also satisfies
\begin{align}
\label{eq:relaxed-one-step}
    &\eta_k\ip{s_k}{x_k-x}
    +\eta_k\bigl(R(x_{k+1})-R(x)\bigr)\\
    &\qquad\le
    D_h(x,x_k)-D_h(x,x_{k+1})
    +\frac{\eta_k^2}{2\mu_h}\norm{s_k}_*^2.\notag
\end{align}
\end{proposition}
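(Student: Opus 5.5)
The plan is to read off the first-order optimality condition of the subproblem \eqref{eq:soft-update} and combine it with the three-point identity \eqref{eq:three-point}. The objective in \eqref{eq:soft-update} is $\eta_k\ip{s_k}{\cdot}+D_h(\cdot,x_k)+\eta_k R$. We apply the subdifferential sum rule, which is legitimate because $h$ is differentiable on the relevant domain; where $h$ is extended-valued, as for the entropy on the simplex, we take $\dom R\subseteq\dom h$ and assume the usual qualification condition. The rule gives some $\xi\in\partial R(x_{k+1})$ with
\[
    \eta_k s_k+\nabla h(x_{k+1})-\nabla h(x_k)+\eta_k\xi=0 .
\]
Applying the subgradient inequality to $\xi$ at any $x\in\dom R$ gives $\eta_k(R(x)-R(x_{k+1}))\ge\ip{\eta_k\xi}{x-x_{k+1}}$. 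Substituting for $\eta_k\xi$ then yields
\[
    \eta_k\ip{s_k}{x_{k+1}-x}+\eta_k\bigl(R(x_{k+1})-R(x)\bigr)\le\ip{\nabla h(x_k)-\nabla h(x_{k+1})}{x_{k+1}-x}.
\]
We then use \eqref{eq:three-point} with $z=x_k$ and $y=x_{k+1}$. The right-hand side becomes exactly $D_h(x,x_k)-D_h(x,x_{k+1})-D_h(x_{k+1},x_k)$, which proves \eqref{eq:exact-one-step}.

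For \eqref{eq:relaxed-one-step}, we add $\eta_k\ip{s_k}{x_k-x_{k+1}}$ to both sides of \eqref{eq:exact-one-step}. This moves the inner product to the point $x_k$, and the right-hand side acquires the extra term
\[
    \eta_k\ip{s_k}{x_k-x_{k+1}}-D_h(x_{k+1},x_k).
\]
Two bounds control this term. First, the strong convexity estimate \eqref{eq:strong-bregman} gives $D_h(x_{k+1},x_k)\ge\frac{\mu_h}{2}\norm{x_{k+1}-x_k}^2$. Second, Hölder's inequality with the dual norm gives $\eta_k\ip{s_k}{x_k-x_{k+1}}\le\eta_k\norm{s_k}_*\norm{x_k-x_{k+1}}$. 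With $t=\norm{x_k-x_{k+1}}$, the extra term is therefore at most $\eta_k\norm{s_k}_*t-\frac{\mu_h}{2}t^2$. Maximizing this quadratic in $t$, or equivalently applying Young's inequality, bounds it by $\frac{\eta_k^2}{2\mu_h}\norm{s_k}_*^2$, which is \eqref{eq:relaxed-one-step}.

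The only delicate step is justifying the optimality condition, that is, ensuring $x_{k+1}$ lies where $\nabla h$ exists. Apart from the sum rule, this requires that $x_{k+1}$ stay in the interior (or relative interior) of $\dom h$ whenever $h$ is extended-valued. We will take this as part of the standing assumption that the subproblem is well posed, as for the Legendre-type entropy, whose minimizers lie in the relative interior of the simplex. Alternatively, one can argue directly from the optimality of $x_{k+1}$ along the segment to $x$ and take a one-sided directional derivative. Everything after that step is algebra.
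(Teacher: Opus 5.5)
Your proposal is correct and follows the paper's proof essentially step for step. The optimality condition of \eqref{eq:soft-update}, the subgradient inequality, and the three-point identity \eqref{eq:three-point} with $z=x_k$, $y=x_{k+1}$ give \eqref{eq:exact-one-step}. Splitting $\ip{s_k}{x_k-x}$ at $x_{k+1}$ and absorbing the cross term into $D_h(x_{k+1},x_k)$ via strong convexity and Young's inequality gives \eqref{eq:relaxed-one-step}. Your extra remarks on the sum rule and on $x_{k+1}$ lying where $\nabla h$ exists make explicit a qualification that the paper leaves implicit in its standing assumptions.
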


\begin{proof}
The optimality condition of \eqref{eq:soft-update} gives a vector $q_{k+1}\in\partial R(x_{k+1})$ such that
\[
    \eta_ks_k+\nabla h(x_{k+1})-\nabla h(x_k)+\eta_kq_{k+1}=0.
\]
The subgradient inequality gives \textcolor{black}{$R(x_{k+1})-R(x)\le\ip{q_{k+1}}{x_{k+1}-x}$}.
Substitution and the three-point identity \eqref{eq:three-point} yield \eqref{eq:exact-one-step}.

To obtain \eqref{eq:relaxed-one-step}, write \textcolor{black}{$\ip{s_k}{x_k-x}=\ip{s_k}{x_{k+1}-x}+\ip{s_k}{x_k-x_{k+1}}$}.
Strong convexity and Young's inequality give
\[
    \eta_k\ip{s_k}{x_k-x_{k+1}}-D_h(x_{k+1},x_k)
    \le\frac{\eta_k^2}{2\mu_h}\norm{s_k}_*^2.
\]
Combining this bound with \eqref{eq:exact-one-step} proves the second inequality.
\end{proof}

This proposition is the geometric backbone of Branch I. It does not yet use the origin of $s_k$. The next results add objective structure.

\subsection{General soft data fitting}
\label{subsec:general-soft}
Let
\begin{equation}
\label{eq:general-soft-problem}
    \min_x F(x):=\Phi(x)+R(x),
\end{equation}
where $\Phi$ is convex and differentiable. We use relative smoothness because it matches the Bregman geometry \cite{LuFreundNesterov2018}.

\begin{assumption}[Relative smoothness]
\label{ass:relative-smoothness}
There exists $L_\Phi>0$ such that, for all $x,z$ in the domain,
\begin{equation}
\label{eq:relative-smoothness}
    \Phi(z)\le\Phi(x)+\ip{\nabla\Phi(x)}{z-x}+L_\Phi D_h(z,x).
\end{equation}
\end{assumption}
In Euclidean geometry, this is the usual Lipschitz-gradient inequality.

The deterministic result is a reference statement for the full-gradient version of \eqref{eq:soft-update}. It is included because it shows the role of the geometry without stochastic noise. The randomized row-action result follows next.

\begin{theorem}[Deterministic soft branch]
\label{thm:det-soft}
Assume \ref{ass:relative-smoothness} and set $s_k=\nabla\Phi(x_k)$. If $0<\eta_k\le1/L_\Phi$, then, for every $x\in\dom R$,
\begin{equation}
\label{eq:det-soft-sum}
    \sum_{k=0}^{K-1}\eta_k\bigl(F(x_{k+1})-F(x)\bigr)
    \le D_h(x,x_0).
\end{equation}
Let \textcolor{black}{$S_K=\sum_{k=0}^{K-1}\eta_k$ and $\bar x_K=\tfrac{1}{S_K}\sum_{k=0}^{K-1}\eta_kx_{k+1}$}.
Then
\begin{equation}
\label{eq:det-soft-rate}
    F(\bar x_K)-F(x)\le\frac{D_h(x,x_0)}{S_K}.
\end{equation}
In particular, a constant stepsize gives an $O(1/K)$ bound.
\end{theorem}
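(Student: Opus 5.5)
The plan is to combine the exact one-step inequality \eqref{eq:exact-one-step} of Proposition~\ref{prop:one-step}, with $s_k=\nabla\Phi(x_k)$, with relative smoothness and convexity of $\Phi$. Together they turn the linear term $\eta_k\ip{\nabla\Phi(x_k)}{x_{k+1}-x}$ into an objective gap at $x_{k+1}$.

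\textbf{Step 1: a one-step bound.} Fix $x\in\dom R$. By convexity of $\Phi$,
\[
\Phi(x)\ge\Phi(x_k)+\ip{\nabla\Phi(x_k)}{x-x_k}.
\]
By Assumption~\ref{ass:relative-smoothness} applied with $z=x_{k+1}$,
\[
\Phi(x_{k+1})\le\Phi(x_k)+\ip{\nabla\Phi(x_k)}{x_{k+1}-x_k}+L_\Phi D_h(x_{k+1},x_k).
\]
Subtracting the first inequality from the second gives
\[
\Phi(x_{k+1})-\Phi(x)\le\ip{\nabla\Phi(x_k)}{x_{k+1}-x}+L_\Phi D_h(x_{k+1},x_k).
\]
Multiply by $\eta_k$, add $\eta_k(R(x_{k+1})-R(x))$, and use \eqref{eq:exact-one-step}. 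This yields
\[
\eta_k\bigl(F(x_{k+1})-F(x)\bigr)\le D_h(x,x_k)-D_h(x,x_{k+1})-(1-\eta_kL_\Phi)D_h(x_{k+1},x_k).
\]
Since $\eta_k\le1/L_\Phi$ and $D_h\ge0$ by convexity of $h$, the last term can be dropped.

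\textbf{Step 2: telescoping.} Summing over $k=0,\ldots,K-1$ telescopes the right-hand side to $D_h(x,x_0)-D_h(x,x_K)\le D_h(x,x_0)$. This gives \eqref{eq:det-soft-sum}.

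\textbf{Step 3: the averaged iterate.} Divide by $S_K$. Since $F$ is convex and $\bar x_K$ is a convex combination of the $x_{k+1}$ with weights $\eta_k/S_K$, Jensen's inequality gives $F(\bar x_K)\le S_K^{-1}\sum_k\eta_kF(x_{k+1})$, which proves \eqref{eq:det-soft-rate}. Each $x_{k+1}$ lies in $\dom R$, being a minimizer of a problem with the term $\eta_kR$, so $\bar x_K\in\dom R$ by convexity. For a constant stepsize $\eta$ we have $S_K=K\eta$, which gives the $O(1/K)$ bound.

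\textbf{Main obstacle: domains.} The delicate point is bookkeeping rather than a hard estimate. Relative smoothness must be applied at the points $x_k,x_{k+1}$, which we need to lie in the domain where \eqref{eq:relative-smoothness} holds and where $h$ is differentiable. Likewise, convexity of $\Phi$ must be used between $x_k$ and $x$. I would handle this by reading ``the domain'' in Assumption~\ref{ass:relative-smoothness} as containing $\dom R\cap\dom h$. The iterates stay there because they solve \eqref{eq:soft-update} and, under the standing assumption that the subproblem has a solution, lie where $\nabla h$ exists.
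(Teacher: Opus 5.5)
Your proposal is correct and follows the paper's proof in Appendix~\ref{app:general-soft-proofs} essentially line for line. Like the paper, you combine relative smoothness with convexity to get \eqref{eq:app-det-combine}, merge this with \eqref{eq:exact-one-step}, drop the term $-(1-\eta_kL_\Phi)D_h(x_{k+1},x_k)$, telescope, and apply Jensen's inequality. Your remarks on domains and on $\bar x_K\in\dom R$ are careful additions that the paper leaves implicit.
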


The proof is given in Appendix~\ref{app:general-soft-proofs}. It combines \eqref{eq:exact-one-step} with relative smoothness and then telescopes. 
\vl{This theorem is the deterministic reference for the stochastic row-action analysis and shows explicitly how the Bregman geometry enters the bound.}

For a row-action method, the local oracle must be scaled so that it is unbiased for the full gradient. We write
\begin{equation}
\label{eq:stoch-oracle-general}
    s_k=\nabla\Phi(x_k)+\delta_k,
    \qquad
    \EE[\delta_k\mid\cF_k]=0.
\end{equation}

\begin{assumption}[Bounded conditional variance]
\label{ass:bounded-variance}
There exists $\sigma\ge0$ such that
\[
    \EE[\norm{\delta_k}_*^2\mid\cF_k]\le\sigma^2
    \qquad\text{a.s. for every }k.
\]
\end{assumption}

\begin{theorem}[Stochastic soft branch]
\label{thm:stoch-soft}
Assume \ref{ass:relative-smoothness} and \ref{ass:bounded-variance}. Let the oracle satisfy \eqref{eq:stoch-oracle-general}, and assume \textcolor{black}{$0<\eta_k\le\bar\eta<\tfrac{1}{L_\Phi}$}.
Then, for every $x\in\dom R$,
\begin{align}
\label{eq:stoch-soft-sum}
    \sum_{k=0}^{K-1}\eta_k\EE[F(x_{k+1})-F(x)]
    \le
    D_h(x,x_0)
    +\frac{\sigma^2}{2\mu_h(1-\bar\eta L_\Phi)}
    \sum_{k=0}^{K-1}\eta_k^2.
\end{align}
For the weighted average \textcolor{black}{$\bar x_K=\tfrac{1}{S_K}\sum_{k=0}^{K-1}\eta_kx_{k+1}$}, we have
\begin{equation}
\label{eq:stoch-soft-rate}
    \EE[F(\bar x_K)-F(x)]
    \le
    \frac{D_h(x,x_0)}{S_K}
    +\frac{\sigma^2}{2\mu_h(1-\bar\eta L_\Phi)}
    \frac{\sum_{k=0}^{K-1}\eta_k^2}{S_K}.
\end{equation}
Thus a horizon-tuned rule $\eta_k=\eta_0/\sqrt K$ gives an $O(K^{-1/2})$ bound. If $\sigma=0$, the deterministic $O(1/S_K)$ bound is recovered.
\end{theorem}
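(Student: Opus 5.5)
Apply the exact inequality \eqref{eq:exact-one-step} of Proposition~\ref{prop:one-step} with $s_k=\nabla\Phi(x_k)+\delta_k$ and a fixed comparison point $x\in\dom R$, then handle the three pieces separately: the full-gradient part, the noise part, and the Bregman term $D_h(x_{k+1},x_k)$. Split
\[
\ip{s_k}{x_{k+1}-x}=\ip{\nabla\Phi(x_k)}{x_{k+1}-x_k}+\ip{\nabla\Phi(x_k)}{x_k-x}+\ip{\delta_k}{x_{k+1}-x}.
\]
Relative smoothness \eqref{eq:relative-smoothness} with $z=x_{k+1}$ gives
\[
\ip{\nabla\Phi(x_k)}{x_{k+1}-x_k}\ge\Phi(x_{k+1})-\Phi(x_k)-L_\Phi D_h(x_{k+1},x_k).
\]
Convexity of $\Phi$ gives $\ip{\nabla\Phi(x_k)}{x_k-x}\ge\Phi(x_k)-\Phi(x)$. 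Inserting both bounds yields
\[
\eta_k\bigl(F(x_{k+1})-F(x)\bigr)\le D_h(x,x_k)-D_h(x,x_{k+1})-(1-\eta_kL_\Phi)D_h(x_{k+1},x_k)-\eta_k\ip{\delta_k}{x_{k+1}-x}.
\]

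The noise term is the main obstacle, because $x_{k+1}$ depends on $\delta_k$, so the inner product is not mean zero. We split
\[
\ip{\delta_k}{x_{k+1}-x}=\ip{\delta_k}{x_k-x}+\ip{\delta_k}{x_{k+1}-x_k}.
\]
The first piece is $\cF_k$-measurable in its second argument, so its conditional expectation vanishes by \eqref{eq:stoch-oracle-general}. For the second piece we use \eqref{eq:strong-bregman}, which gives $D_h(x_{k+1},x_k)\ge\frac{\mu_h}{2}\norm{x_{k+1}-x_k}^2$. Since $1-\eta_kL_\Phi\ge1-\bar\eta L_\Phi>0$, Young's inequality then bounds
\[
-\eta_k\ip{\delta_k}{x_{k+1}-x_k}-(1-\eta_kL_\Phi)\frac{\mu_h}{2}\norm{x_{k+1}-x_k}^2\le\frac{\eta_k^2\norm{\delta_k}_*^2}{2\mu_h(1-\bar\eta L_\Phi)}.
\]
This is exactly where the strict inequality $\bar\eta<1/L_\Phi$ is needed, and where the constant in \eqref{eq:stoch-soft-sum} comes from.

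Next we take conditional expectations given $\cF_k$ and apply Assumption~\ref{ass:bounded-variance}. This gives
\[
\eta_k\EE[F(x_{k+1})-F(x)\mid\cF_k]\le D_h(x,x_k)-\EE[D_h(x,x_{k+1})\mid\cF_k]+\frac{\sigma^2\eta_k^2}{2\mu_h(1-\bar\eta L_\Phi)}.
\]
Taking total expectations, summing over $k=0,\dots,K-1$, telescoping, and dropping $-\EE D_h(x,x_K)\le0$ gives \eqref{eq:stoch-soft-sum}; this uses that $\eta_k$ is deterministic. Finally, $F$ is convex, so Jensen's inequality at the weighted average $\bar x_K$, followed by division by $S_K$, yields \eqref{eq:stoch-soft-rate}.

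For the corollaries: with the horizon-tuned rule $\eta_k=\eta_0/\sqrt K$, the bound becomes
\[
\frac{D_h(x,x_0)}{\eta_0\sqrt K}+\frac{\sigma^2\eta_0}{2\mu_h(1-\bar\eta L_\Phi)\sqrt K},
\]
assuming $\eta_0/\sqrt K\le\bar\eta$. With $\sigma=0$ the bound reduces to $D_h(x,x_0)/S_K$.

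A minor technical point is that $x_{k+1}$ must be measurable and $F(x_{k+1})$ integrable. We assume this tacitly, as is standard.
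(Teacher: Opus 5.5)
Your proposal is correct and follows the paper's proof essentially step for step. Like the paper, you combine the exact one-step inequality \eqref{eq:exact-one-step} with relative smoothness and convexity, split $\ip{\delta_k}{x_{k+1}-x}$ at $x_k$ so that one piece has zero conditional mean, absorb the other piece into $-(1-\eta_kL_\Phi)D_h(x_{k+1},x_k)$ via strong convexity and Young's inequality, and then take expectations, telescope, and apply Jensen. Your explicit note that the horizon rule requires $\eta_0/\sqrt K\le\bar\eta$ is a small, correct addition.
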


The proof is given in Appendix~\ref{app:general-soft-proofs}. The ratio \textcolor{black}{$\tfrac{\sum_k\eta_k^2}{\sum_k\eta_k}$} is the usual stochastic term. The constrained least-squares oracle has a more precise second-moment identity. We use it next instead of a uniform variance bound.

\begin{remark}[Euclidean and mirror special cases]
If
\[
    h(x)=\frac12\norm{x}_2^2
    \qquad\text{and}\qquad
    R=\ind_C,
\]
the update \eqref{eq:soft-update} becomes a projected stochastic-gradient step, and Theorem~\ref{thm:stoch-soft} gives its standard ergodic convergence bound. If $R=0$ and $h$ is a general strongly convex reference function, the same result becomes the corresponding stochastic mirror-descent estimate. In the present framework, the stochastic oracle is generated from one row or one block of the linear observation model.
\end{remark}

\subsection{Noisy constrained least squares}
\label{subsec:noisy-cls-theory}
Consider
\begin{equation}
\label{eq:cls}
    \min_{x\in C}\Phi(x),
    \qquad
    \Phi(x)=\frac12\norm{Ax-y}_2^2,
\end{equation}
where $C$ is nonempty, closed, and convex. Sample row $i$ with probability
\begin{equation}
\label{eq:row-prob}
    p_i=\frac{\norm{a_i}_2^2}{\norm{A}_F^2},
\end{equation}
and use the normalized row oracle \eqref{eq:normalized-row-oracle}. Branch I becomes
\begin{equation}
\label{eq:projected-cls}
    x_{k+1}=P_C\bigl(x_k-\eta_kg_{i_k}(x_k)\bigr).
\end{equation}

The following lemma is the key difference with the generic stochastic theorem. The conditional second moment is not bounded by an arbitrary constant. It is exactly proportional to the current objective.

\begin{lemma}[Exact oracle identities]
\label{lem:ls-identities}
For every $x\in\RR^n$,
\begin{equation}
\label{eq:unbiased}
    \EE_i[g_i(x)]=\frac{1}{\norm{A}_F^2}\nabla\Phi(x),
\end{equation}
and
\begin{equation}
\label{eq:second-moment}
    \EE_i\bigl[\norm{g_i(x)}_2^2\bigr]
    =\frac{1}{\norm{A}_F^2}\norm{Ax-y}_2^2
    =\frac{2}{\norm{A}_F^2}\Phi(x).
\end{equation}
\end{lemma}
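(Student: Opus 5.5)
The plan is a direct computation of both moments from the definition of the sampling distribution. The lemma is purely algebraic, so neither convexity nor the constraint set $C$ enters. The central observation is that the sampling probability $p_i=\norm{a_i}_2^2/\norm{A}_F^2$ from \eqref{eq:row-prob} exactly cancels the normalization $\norm{a_i}_2^2$ in the oracle \eqref{eq:normalized-row-oracle}. We use that zero rows have been removed, so every $p_i>0$ and every $g_i$ is well defined.

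For the first identity \eqref{eq:unbiased}, expand the expectation as a finite sum:
\[
\EE_i[g_i(x)]=\sum_{i=1}^m p_i\,\frac{a_i^\top x-y_i}{\norm{a_i}_2^2}\,a_i=\frac{1}{\norm{A}_F^2}\sum_{i=1}^m (a_i^\top x-y_i)\,a_i .
\]
Recognize the last sum as $A^\top(Ax-y)$, since the $i$th entry of $Ax-y$ is $a_i^\top x-y_i$ and $A^\top w=\sum_i w_i a_i$. Finally, note that $\nabla\Phi(x)=A^\top(Ax-y)$ for $\Phi(x)=\tfrac12\norm{Ax-y}_2^2$.

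For the second identity \eqref{eq:second-moment}, first compute the norm of a single oracle:
\[
\norm{g_i(x)}_2^2=\frac{(a_i^\top x-y_i)^2}{\norm{a_i}_2^4}\,\norm{a_i}_2^2=\frac{(a_i^\top x-y_i)^2}{\norm{a_i}_2^2}.
\]
Weighting by $p_i$ again cancels the row norm, giving
\[
\EE_i\bigl[\norm{g_i(x)}_2^2\bigr]=\frac{1}{\norm{A}_F^2}\sum_{i=1}^m (a_i^\top x-y_i)^2=\frac{1}{\norm{A}_F^2}\norm{Ax-y}_2^2=\frac{2}{\norm{A}_F^2}\Phi(x).
\]

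There is no genuine obstacle. The only point needing care is that the second moment cancels the row norm exactly, rather than only up to a constant. This works because the oracle carries the factor $1/\norm{a_i}_2^2$ while its squared norm contributes only $\norm{a_i}_2^2$, leaving a net $1/\norm{a_i}_2^2$ that $p_i$ absorbs. This exactness is what makes the bound sharp, with equality rather than a uniform variance constant as in Assumption~\ref{ass:bounded-variance}.
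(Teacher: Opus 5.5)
Your proposal is correct and follows the paper's proof: you let the sampling probability $p_i=\norm{a_i}_2^2/\norm{A}_F^2$ cancel the oracle's normalization. This gives $\EE_i[g_i(x)]=A^\top(Ax-y)/\norm{A}_F^2$ and, from $\norm{g_i(x)}_2^2=(a_i^\top x-y_i)^2/\norm{a_i}_2^2$, the second-moment identity.
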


\begin{proof}
Using \eqref{eq:row-prob},
\[
    \EE_i[g_i(x)]
    =\sum_i\frac{\norm{a_i}_2^2}{\norm{A}_F^2}
    \frac{a_i^\top x-y_i}{\norm{a_i}_2^2}a_i
    =\frac{A^\top(Ax-y)}{\norm{A}_F^2}.
\]
Moreover, \textcolor{black}{$\norm{g_i(x)}_2^2=\tfrac{(a_i^\top x-y_i)^2}{\norm{a_i}_2^2}$}.
Taking the expectation with the same probabilities proves \eqref{eq:second-moment}.
\end{proof}

\begin{proposition}[Conditional recursion]
\label{prop:cls-recursion}
Let $\beta=\norm{A}_F^{-2}$. For every $x\in C$,
\begin{align}
\label{eq:cls-recursion}
    \eta_k\beta\bigl(\Phi(x_k)-\Phi(x)\bigr)
    &\le
    \frac12\norm{x-x_k}_2^2
    -\EE\left[\frac12\norm{x-x_{k+1}}_2^2\mid\cF_k\right]
    +\eta_k^2\beta\Phi(x_k).
\end{align}
Equivalently,
\begin{align}
\label{eq:cls-recursion-second}
    \eta_k\beta(1-\eta_k)\Phi(x_k)
    &\le
    \frac12\norm{x-x_k}_2^2
    -\EE\left[\frac12\norm{x-x_{k+1}}_2^2\mid\cF_k\right]
    +\eta_k\beta\Phi(x).
\end{align}
\end{proposition}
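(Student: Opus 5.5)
We will prove \eqref{eq:cls-recursion} by specializing the relaxed one-step inequality \eqref{eq:relaxed-one-step} of Proposition~\ref{prop:one-step} to the Euclidean setting. We then take conditional expectations using Lemma~\ref{lem:ls-identities}. The setting is $h=\tfrac12\norm{\cdot}_2^2$ (so $\mu_h=1$ and $\norm{\cdot}_*=\norm{\cdot}_2$), $R=\ind_C$, and $s_k=g_{i_k}(x_k)$. In this setting the update \eqref{eq:soft-update} is exactly the projected step \eqref{eq:projected-cls}. Fix $x\in C$. Then $R(x)=R(x_{k+1})=0$, and \eqref{eq:relaxed-one-step} reads
\[
    \eta_k\ip{g_{i_k}(x_k)}{x_k-x}
    \le \tfrac12\norm{x-x_k}_2^2-\tfrac12\norm{x-x_{k+1}}_2^2+\tfrac{\eta_k^2}{2}\norm{g_{i_k}(x_k)}_2^2 .
\]
One could equally derive this directly from nonexpansiveness of $P_C$, since $x=P_C(x)$, followed by expanding the square; this gives the same inequality with equality before projection.

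Next, we condition on $\cF_k$. The iterate $x_k$ is $\cF_k$-measurable and $i_k$ is drawn independently with probabilities \eqref{eq:row-prob}. By \eqref{eq:unbiased}, the left side has conditional expectation $\eta_k\beta\ip{\nabla\Phi(x_k)}{x_k-x}$. By \eqref{eq:second-moment}, the last term on the right has conditional expectation $\tfrac{\eta_k^2}{2}\cdot 2\beta\Phi(x_k)=\eta_k^2\beta\Phi(x_k)$. Convexity of $\Phi$ gives $\ip{\nabla\Phi(x_k)}{x_k-x}\ge\Phi(x_k)-\Phi(x)$. Together these yield \eqref{eq:cls-recursion}.

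The equivalent form \eqref{eq:cls-recursion-second} then follows by pure rearrangement. We move $\eta_k^2\beta\Phi(x_k)$ to the left and $\eta_k\beta\Phi(x)$ to the right, giving $\eta_k\beta(1-\eta_k)\Phi(x_k)$ on the left. No step is a genuine obstacle. The only point needing care is to use the \emph{relaxed} inequality, evaluated at $x_k$, rather than \eqref{eq:exact-one-step}. This matters because the oracle identities of Lemma~\ref{lem:ls-identities} are available at the $\cF_k$-measurable point $x_k$, not at $x_{k+1}$. The bound on the squared-norm term then comes out exact rather than as a variance bound.
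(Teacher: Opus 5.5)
Your proposal is correct and follows the paper's proof essentially step for step. Like the paper, you specialize the relaxed inequality \eqref{eq:relaxed-one-step} with $h=\tfrac12\norm{\cdot}_2^2$, $R=\ind_C$, $s_k=g_{i_k}(x_k)$, then condition on $\cF_k$ using \eqref{eq:unbiased} and \eqref{eq:second-moment}, apply convexity of $\Phi$, and rearrange to get \eqref{eq:cls-recursion-second}.
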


\begin{proof}
Apply \eqref{eq:relaxed-one-step} with $h(x)=\frac12\norm{x}_2^2$, $R=\ind_C$, and $s_k=g_{i_k}(x_k)$. Since $x,x_{k+1}\in C$, the terms involving $R$ vanish. Taking conditional expectation and using \eqref{eq:unbiased} gives
\begin{align*}
    \eta_k\beta\ip{\nabla\Phi(x_k)}{x_k-x}
    &\le
    \frac12\norm{x-x_k}_2^2
    -\EE\left[\frac12\norm{x-x_{k+1}}_2^2\mid\cF_k\right]\\
    &\quad +\frac{\eta_k^2}{2}\EE[\norm{g_{i_k}(x_k)}_2^2\mid\cF_k].
\end{align*}
Convexity of $\Phi$ and \eqref{eq:second-moment} give \eqref{eq:cls-recursion}. Rearranging the last term gives \eqref{eq:cls-recursion-second}.
\end{proof}

The next theorem treats the general inconsistent case. The weighted average is essential in the proof because the row oracle does not vanish at a noninterpolating minimizer.

\begin{theorem}[Expected ergodic bound for constrained least squares]
\label{thm:ls-global}
Let $x^\star$ be a minimizer of \eqref{eq:cls}. Assume \textcolor{black}{$0<\eta_k\le\bar\eta<1$}.
Define \textcolor{black}{$S_K=\sum_{k=0}^{K-1}\eta_k$ and $\bar x_K=\tfrac{1}{S_K}\sum_{k=0}^{K-1}\eta_kx_k$}.
Then
\begin{align}
\label{eq:global-bound}
    \EE[\Phi(\bar x_K)-\Phi(x^\star)]
    &\le
    \frac{\norm{A}_F^2\norm{x^\star-x_0}_2^2}{2(1-\bar\eta)S_K}
    +\frac{\Phi(x^\star)}{1-\bar\eta}
    \frac{\sum_{k=0}^{K-1}\eta_k^2}{S_K}.
\end{align}
Consequently:
\begin{enumerate}
    \item If $\Phi(x^\star)=0$ and $\eta_k\equiv\eta\in(0,1)$, then
    \[
        \EE[\Phi(\bar x_K)]
        \le
        \frac{\norm{A}_F^2\norm{x^\star-x_0}_2^2}{2(1-\eta)\eta K}.
    \]
    \item In the general case, the horizon rule $\eta_k\equiv\eta_0/\sqrt K$ gives an $O(K^{-1/2})$ bound.
    \item A constant stepsize $\eta\in(0,1)$ gives the neighborhood term \textcolor{black}{$\tfrac{\eta}{1-\eta}\Phi(x^\star)$}.
\end{enumerate}
\end{theorem}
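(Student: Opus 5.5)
The plan is to start from the second form \eqref{eq:cls-recursion-second} of Proposition~\ref{prop:cls-recursion}, taken with $x=x^\star\in C$, because it places the current objective value $\Phi(x_k)$ on the left with a positive weight and leaves only $\Phi(x^\star)$ on the right. Since $\Phi(x^\star)\ge0$, we have $\eta_k\beta\Phi(x^\star)=\eta_k\beta(1-\eta_k)\Phi(x^\star)+\eta_k^2\beta\Phi(x^\star)$. Subtracting the first part from both sides gives
\[
    \eta_k\beta(1-\eta_k)\bigl(\Phi(x_k)-\Phi(x^\star)\bigr)
    \le
    \tfrac12\norm{x^\star-x_k}_2^2-\EE\bigl[\tfrac12\norm{x^\star-x_{k+1}}_2^2\mid\cF_k\bigr]
    +\eta_k^2\beta\Phi(x^\star).
\]
The left-hand side is nonnegative because $x_k\in C$ and $x^\star$ minimizes $\Phi$ over $C$. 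So we may replace $1-\eta_k$ by the smaller constant $1-\bar\eta>0$ without reversing the inequality.

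Next, take total expectations and sum over $k=0,\ldots,K-1$. The distance terms telescope to at most $\tfrac12\norm{x^\star-x_0}_2^2$, after dropping the nonpositive final term. This gives
\[
    (1-\bar\eta)\beta\sum_{k}\eta_k\EE[\Phi(x_k)-\Phi(x^\star)]
    \le\tfrac12\norm{x^\star-x_0}_2^2+\beta\Phi(x^\star)\sum_k\eta_k^2.
\]
Dividing by $(1-\bar\eta)\beta S_K$, with $\beta^{-1}=\norm{A}_F^2$, and applying Jensen's inequality to the convex function $\Phi$ at the weighted average $\bar x_K$ (which uses the iterates $x_k$, matching the recursion) yields \eqref{eq:global-bound}.

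The three consequences then follow by substitution. For item 1, set $\Phi(x^\star)=0$ and $S_K=\eta K$ with $\bar\eta=\eta$. For item 2, take $\eta_k=\eta_0/\sqrt K$. Then $S_K=\eta_0\sqrt K$ and $\sum_k\eta_k^2/S_K=\eta_0/\sqrt K$, so both terms are $O(K^{-1/2})$, provided $\eta_0/\sqrt K\le\bar\eta<1$, which holds for large $K$ or small $\eta_0$. For item 3, a constant $\eta$ makes the second term equal to $\eta\Phi(x^\star)/(1-\eta)$, while the first term vanishes as $K\to\infty$.

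The only delicate point is the splitting step. If we used \eqref{eq:cls-recursion} directly, $\Phi(x_k)$ would appear on the right and could not be absorbed without losing the $\Phi(x^\star)$ neighborhood structure. Working from \eqref{eq:cls-recursion-second} and rewriting $\eta_k\beta\Phi(x^\star)$ as above is what produces the factor $1/(1-\bar\eta)$ multiplying both terms. The sign argument, namely that $\Phi(x_k)\ge\Phi(x^\star)$ for feasible iterates, justifies replacing $1-\eta_k$ by $1-\bar\eta$.
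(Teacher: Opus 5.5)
Your proposal is correct and follows the paper's proof essentially step for step. You apply \eqref{eq:cls-recursion-second} at $x=x^\star$, subtract $\eta_k\beta(1-\eta_k)\Phi(x^\star)$, replace $1-\eta_k$ by $1-\bar\eta$, telescope in expectation, and finish with Jensen's inequality at $\bar x_K$; your justification of the $1-\bar\eta$ replacement via $\Phi(x_k)\ge\Phi(x^\star)$ for feasible iterates (with $x_0\in\dom R=C$) makes explicit a step the paper leaves implicit.
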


\begin{proof}
Apply \eqref{eq:cls-recursion-second} with $x=x^\star$ and subtract $\eta_k\beta(1-\eta_k)\Phi(x^\star)$ from both sides. This gives
\begin{align*}
    \eta_k\beta(1-\eta_k)\bigl(\Phi(x_k)-\Phi(x^\star)\bigr)
    &\le
    \frac12\norm{x^\star-x_k}_2^2
    -\EE\left[\frac12\norm{x^\star-x_{k+1}}_2^2\mid\cF_k\right]
    +\eta_k^2\beta\Phi(x^\star).
\end{align*}
Take total expectation and sum. The distance terms telescope. Since $1-\eta_k\ge1-\bar\eta$,
\[
    \beta(1-\bar\eta)\sum_{k=0}^{K-1}\eta_k
    \EE[\Phi(x_k)-\Phi(x^\star)]
    \le
    \frac12\norm{x^\star-x_0}_2^2
    +\beta\Phi(x^\star)\sum_{k=0}^{K-1}\eta_k^2.
\]
Divide by $\beta(1-\bar\eta)S_K$ and use convexity of $\Phi$ at $\bar x_K$.
\end{proof}

The bound separates two terms. The first is the deterministic optimization error. The second is caused by the nonzero residual. A fixed stepsize does not remove this second term. This is why the numerical test in Section~\ref{subsec:noisy-nnls} compares fixed and decreasing stepsizes.

\subsection{Linear convergence in the interpolating case}
\label{subsec:interpolating}
We now assume that
\[
    \cX^\star=\{x\in C:Ax=y\}
\]
is nonempty. In this regime, the normalized row oracle satisfies an exact identity, so the actual iterates can be studied directly.

\begin{assumption}[Residual error bound on $C$]
\label{ass:residual-eb}
There exists $\sigma_C\in(0,\norm{A}_F]$ such that
\begin{equation}
\label{eq:residual-eb}
    \norm{Ax-y}_2^2\ge\sigma_C^2\dist(x,\cX^\star)^2
    \qquad\text{for every }x\in C.
\end{equation}
\end{assumption}
The upper restriction causes no loss. If the bound holds for a larger constant, it also holds after decreasing it.

\begin{lemma}[Row-wise exactness]
\label{lem:row-exactness}
For every $x^\star\in\cX^\star$ and every row $i$,
\begin{equation}
\label{eq:row-exactness}
    \ip{g_i(x)}{x-x^\star}=\norm{g_i(x)}_2^2.
\end{equation}
\end{lemma}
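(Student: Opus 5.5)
The identity is a direct computation; no earlier result beyond the definition \eqref{eq:normalized-row-oracle} is needed. The only input from the interpolating regime is that every $x^\star\in\cX^\star$ satisfies $Ax^\star=y$, so in particular $a_i^\top x^\star=y_i$ for every row $i$. Membership $x^\star\in C$ plays no role here. The statement holds for every $x\in\RR^n$, not only for $x\in C$.

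First I would substitute the definition of $g_i$ into the left-hand side:
\[
    \ip{g_i(x)}{x-x^\star}
    =\frac{a_i^\top x-y_i}{\norm{a_i}_2^2}\,a_i^\top(x-x^\star).
\]
Next I would use the row equation $a_i^\top x^\star=y_i$ to rewrite $a_i^\top(x-x^\star)=a_i^\top x-y_i$. This gives
\[
    \ip{g_i(x)}{x-x^\star}=\frac{(a_i^\top x-y_i)^2}{\norm{a_i}_2^2}.
\]
Finally, this expression is exactly $\norm{g_i(x)}_2^2$, as already computed in the proof of Lemma~\ref{lem:ls-identities}. That proves \eqref{eq:row-exactness}.

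There is no real obstacle. The one point worth stating explicitly is that the argument needs exact interpolation of every sampled row. This is where the hypothesis $\cX^\star\neq\emptyset$ enters, and it is what separates this regime from the noisy setting of Theorem~\ref{thm:ls-global}, where $a_i^\top x^\star\neq y_i$ in general. The assumption that zero rows have been removed ensures $\norm{a_i}_2\neq0$, so the division is well defined.

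Geometrically, the identity says that $x-g_i(x)$ is the orthogonal projection of $x$ onto the hyperplane $\{z:a_i^\top z=y_i\}$, which contains $x^\star$. This is the form in which the lemma will feed the linear-convergence argument of Theorem~\ref{thm:ls-linear}.
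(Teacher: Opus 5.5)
Your proof is correct and matches the paper's argument: substitute the definition of $g_i$, use $a_i^\top x^\star=y_i$ to rewrite $a_i^\top(x-x^\star)$ as $a_i^\top x-y_i$, and recognize $(a_i^\top x-y_i)^2/\norm{a_i}_2^2$ as $\norm{g_i(x)}_2^2$. Your remark that only $Ax^\star=y$ is used, and not $x^\star\in C$, is accurate; this is exactly how the lemma is reused with $\hat x$ in the proof of Theorem~\ref{thm:selector-contraction}.
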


\begin{proof}
Since $a_i^\top x^\star=y_i$,
\[
    \ip{g_i(x)}{x-x^\star}
    =\frac{(a_i^\top x-y_i)a_i^\top(x-x^\star)}{\norm{a_i}_2^2}
    =\frac{(a_i^\top x-y_i)^2}{\norm{a_i}_2^2}.
\]
\end{proof}

\begin{theorem}[Linear convergence of the projected iterates]
\label{thm:ls-linear}
Assume that $\cX^\star\neq\varnothing$ and Assumption~\ref{ass:residual-eb} holds. Let
\[
    x_{k+1}=P_C\bigl(x_k-\eta g_{i_k}(x_k)\bigr),
    \qquad 0<\eta<2,
\]
with probabilities \eqref{eq:row-prob}. Then
\begin{equation}
\label{eq:linear-rate}
    \EE[\dist(x_{k+1},\cX^\star)^2]
    \le
    \left(1-\frac{\eta(2-\eta)\sigma_C^2}{\norm{A}_F^2}\right)
    \EE[\dist(x_k,\cX^\star)^2].
\end{equation}
The factor is minimized at $\eta=1$.
\end{theorem}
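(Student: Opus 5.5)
Fix $k$ and condition on $\cF_k$. The plan is to compare $x_{k+1}$ with the particular point $\bar x:=P_{\cX^\star}(x_k)$, the Euclidean projection of the current iterate onto $\cX^\star$. This projection is well defined because $\cX^\star=C\cap\{x:Ax=y\}$ is nonempty, closed, and convex. Since $\bar x\in\cX^\star$ is admissible in the definition of the distance, we have $\dist(x_{k+1},\cX^\star)\le\norm{x_{k+1}-\bar x}_2$. So it suffices to bound $\EE[\norm{x_{k+1}-\bar x}_2^2\mid\cF_k]$ by the contraction factor times $\norm{x_k-\bar x}_2^2=\dist(x_k,\cX^\star)^2$, and then take total expectation.

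\textbf{Step 1: remove the projection.} Since $\bar x\in C$, we have $\bar x=P_C(\bar x)$. Nonexpansiveness of $P_C$ then gives
\[
\norm{x_{k+1}-\bar x}_2^2\le\norm{x_k-\eta g_{i_k}(x_k)-\bar x}_2^2.
\]

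\textbf{Step 2: expand the square using Lemma~\ref{lem:row-exactness}.} Expanding yields
\[
\norm{x_k-\bar x}_2^2-2\eta\ip{g_{i_k}(x_k)}{x_k-\bar x}+\eta^2\norm{g_{i_k}(x_k)}_2^2 .
\]
Because $\bar x\in\cX^\star$, Lemma~\ref{lem:row-exactness} turns the cross term into $\norm{g_{i_k}(x_k)}_2^2$. The bound therefore becomes
\[
\norm{x_k-\bar x}_2^2-\eta(2-\eta)\norm{g_{i_k}(x_k)}_2^2 .
\]
This holds pathwise, for every sampled row.

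\textbf{Step 3: take the conditional expectation.} The second-moment identity \eqref{eq:second-moment} of Lemma~\ref{lem:ls-identities} gives $\EE[\norm{g_{i_k}(x_k)}_2^2\mid\cF_k]=\norm{Ax_k-y}_2^2/\norm{A}_F^2$. Since $0<\eta<2$, the coefficient $\eta(2-\eta)$ is positive, so we may lower-bound this quantity. We have $x_k\in C$: for $k\ge1$ because $x_k$ is the output of $P_C$, and for $k=0$ under the standing convention $x_0\in C$. Assumption~\ref{ass:residual-eb} then gives $\norm{Ax_k-y}_2^2\ge\sigma_C^2\dist(x_k,\cX^\star)^2$. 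Combining these,
\[
\EE[\dist(x_{k+1},\cX^\star)^2\mid\cF_k]\le\Bigl(1-\tfrac{\eta(2-\eta)\sigma_C^2}{\norm{A}_F^2}\Bigr)\dist(x_k,\cX^\star)^2 .
\]
Taking total expectation yields \eqref{eq:linear-rate}. The factor lies in $[0,1)$ because $\eta(2-\eta)\le1$ and $\sigma_C\le\norm{A}_F$. The map $\eta\mapsto\eta(2-\eta)$ is maximized at $\eta=1$, which gives the final claim.

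The main subtlety is the choice of comparison point. It must be $\bar x=P_{\cX^\star}(x_k)$, which is $\cF_k$-measurable, so that the recursion closes in terms of $\dist(\cdot,\cX^\star)$ rather than the distance to a fixed solution. One must also check that $\bar x$ lies both in $C$ (needed for Step 1) and in every row hyperplane (needed for Step 2). Membership in $\cX^\star$ guarantees both. I would also flag the requirement $x_0\in C$; if it were not assumed, the argument would start at $k=1$.
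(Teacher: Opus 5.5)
Your proof is correct and follows the paper's argument essentially step for step: you compare with the projection of $x_k$ onto $\cX^\star$, remove $P_C$ by nonexpansiveness, collapse the cross term via Lemma~\ref{lem:row-exactness}, and then apply \eqref{eq:second-moment} and Assumption~\ref{ass:residual-eb} before taking total expectation. The requirement $x_0\in C$ that you flag is covered by the initialization $x_0\in\dom R$ in Algorithm~\ref{alg:soft} with $R=\ind_C$.
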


\begin{proof}
Let $x_k^\star$ be a Euclidean projection of $x_k$ onto $\cX^\star$. Since $x_k^\star\in C$, nonexpansiveness of $P_C$ gives
\begin{align*}
    \dist(x_{k+1},\cX^\star)^2
    &\le \norm{x_k-\eta g_{i_k}(x_k)-x_k^\star}_2^2\\
    &=\dist(x_k,\cX^\star)^2
    -2\eta\ip{g_{i_k}(x_k)}{x_k-x_k^\star}
    +\eta^2\norm{g_{i_k}(x_k)}_2^2.
\end{align*}
Lemma~\ref{lem:row-exactness} gives
\[
    \dist(x_{k+1},\cX^\star)^2
    \le
    \dist(x_k,\cX^\star)^2
    -\eta(2-\eta)\norm{g_{i_k}(x_k)}_2^2.
\]
Take conditional expectation, use \eqref{eq:second-moment}, and apply \eqref{eq:residual-eb}. Taking total expectation proves \eqref{eq:linear-rate}.
\end{proof}

\begin{corollary}[Classical randomized Kaczmarz rate]
\label{cor:classical-rate}
Let $C=\RR^n$, assume $Ax=y$ is consistent, and take $x_0\in\range(A^\top)$. Let $x^\dagger$ be the minimum-norm solution. Then
\[
    \EE[\norm{x_{k+1}-x^\dagger}_2^2]
    \le
    \left(1-\frac{\eta(2-\eta)(\sigma_{\min}^+(A))^2}{\norm{A}_F^2}\right)
    \EE[\norm{x_k-x^\dagger}_2^2].
\]
For $\eta=1$, this is the standard randomized Kaczmarz factor \cite{StrohmerVershynin2009}.
\end{corollary}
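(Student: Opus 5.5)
The plan is to obtain the corollary as a specialization of Theorem~\ref{thm:ls-linear} with $C=\RR^n$, where $P_C$ is the identity and the iteration is the relaxed randomized Kaczmarz step $x_{k+1}=x_k-\eta g_{i_k}(x_k)$. Two things must be checked. First, $\dist(x_k,\cX^\star)$ coincides with $\norm{x_k-x^\dagger}_2$ along the iterates. Second, Assumption~\ref{ass:residual-eb} holds with $\sigma_C=\sigma_{\min}^+(A)$.

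For the first point, note that $\cX^\star=\{x:Ax=y\}=x^\dagger+\ker(A)$ and $x^\dagger\in\range(A^\top)=\ker(A)^\perp$. Each increment $g_i(x)$ is a multiple of $a_i\in\range(A^\top)$. So by induction from $x_0\in\range(A^\top)$, every iterate satisfies $x_k\in\range(A^\top)$, and hence $x_k-x^\dagger\in\ker(A)^\perp$. The Euclidean projection of $x_k$ onto the affine set $x^\dagger+\ker(A)$ is therefore $x^\dagger$ itself, so $\dist(x_k,\cX^\star)=\norm{x_k-x^\dagger}_2$ for every $k$, almost surely.

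For the second point, take any $x\in\RR^n$ and let $z=x-P_{\cX^\star}(x)$. Then $z\in\ker(A)^\perp=\range(A^\top)$ and $Ax-y=Az$. The variational characterization of the smallest nonzero singular value on $\range(A^\top)$ gives
\[
\norm{Ax-y}_2^2=\norm{Az}_2^2\ge(\sigma_{\min}^+(A))^2\norm{z}_2^2=(\sigma_{\min}^+(A))^2\dist(x,\cX^\star)^2 .
\]
Also $\sigma_{\min}^+(A)\le\norm{A}_2\le\norm{A}_F$, so the constant lies in the range required by the assumption.

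Applying \eqref{eq:linear-rate} and replacing both distances by $\norm{\cdot-x^\dagger}_2$ then yields the stated inequality. With $\eta=1$ the factor becomes $1-(\sigma_{\min}^+(A))^2/\norm{A}_F^2$, which is the Strohmer--Vershynin rate. No step is really an obstacle. The only point needing care is the invariance $x_k\in\range(A^\top)$, which is what lets us identify the distance to the solution set with the distance to $x^\dagger$ on the iterates.
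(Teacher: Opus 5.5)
Your proposal is correct and follows the paper's intended argument: the corollary is presented as a direct specialization of Theorem~\ref{thm:ls-linear} with $C=\RR^n$ and $\sigma_C=\sigma_{\min}^+(A)$. Your verification of Assumption~\ref{ass:residual-eb} and of the invariance $x_k\in\range(A^\top)$, which gives $\dist(x_k,\cX^\star)=\norm{x_k-x^\dagger}_2$, supplies exactly the details the paper leaves implicit, with the restriction $0<\eta<2$ inherited from the theorem.
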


The role of this corollary is an exact consistency check. When both the constraint and the nonzero optimal residual disappear, the framework recovers the classical randomized Kaczmarz update and its standard convergence factor. More generally, for $\eta=1$ and a closed convex set $C$, the update \eqref{eq:projected-cls} is the projected randomized Kaczmarz method considered, for example, in~\cite{Popa2008CKE,WuXiang2020PRK}. Theorem~\ref{thm:ls-linear} shows that the same row-wise identity gives linear convergence whenever the residual error bound \eqref{eq:residual-eb} holds on $C$.

\subsection{Equality-constrained Bregman selectors}
\label{subsec:selector-theory}
Consider
\begin{equation}
\label{eq:hard-problem}
    \min_{x\in\RR^n} f(x)
    \qquad\text{subject to}\qquad
    Ax=b,
\end{equation}
where $f$ is proper, closed, and $\mu_f$-strongly convex. We assume that the problem is feasible. The constrained minimizer $\hat x$ is then unique. For $u\in\partial f(x)$, the generalized Bregman distance is
\begin{equation}
\label{eq:generalized-bregman}
    D_f^u(x,\hat x)
    =f(\hat x)-f(x)-\ip{u}{\hat x-x}
    =f^*(u)-\ip{u}{\hat x}+f(\hat x).
\end{equation}

The following assumption converts the feasibility residual into distance in the selector geometry.

\begin{assumption}[Residual-to-Bregman error bound]
\label{ass:bregman-eb}
There exists $\theta(\hat x)>0$ such that
\begin{equation}
\label{eq:bregman-eb}
    D_f^u(x,\hat x)
    \le\frac{1}{\theta(\hat x)}\norm{Ax-b}_2^2
\end{equation}
for every pair $x,u$ with $u\in\partial f(x)\cap\range(A^\top)$ considered by the algorithm.
\end{assumption}

The condition is not automatic. It is the selector analogue of the residual error bound used in Theorem~\ref{thm:ls-linear}. The main difference is that the distance depends on the geometry and, for nonsmooth $f$, on the selected subgradient $u$.

\begin{theorem}[Expected contraction of relaxed Bregman--Kaczmarz]
\label{thm:selector-contraction}
Assume that $f$ is $\mu_f$-strongly convex and that Assumption~\ref{ass:bregman-eb} holds. Let $u_0\in\range(A^\top)$, set $x_k=\nabla f^*(u_k)$, and apply Algorithm~\ref{alg:hard}. Assume \textcolor{black}{$0<\eta_k<2\mu_f$}.
Let $\bar\theta\in(0,\theta(\hat x)]$ be chosen so that \textcolor{black}{$\rho_k:=1-\tfrac{\bar\theta}{\norm{A}_F^2}\left(\eta_k-\tfrac{\eta_k^2}{2\mu_f}\right)\in[0,1)$}.
Then
\begin{equation}
\label{eq:selector-contraction}
    \EE[D_f^{u_k}(x_k,\hat x)]
    \le
    \left(\prod_{j=0}^{k-1}\rho_j\right)
    D_f^{u_0}(x_0,\hat x).
\end{equation}
If the stepsizes stay in a compact subinterval of $(0,2\mu_f)$, the convergence is linear in expectation with a uniform factor.
\end{theorem}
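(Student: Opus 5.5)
We work with the dual form of the Bregman distance in \eqref{eq:generalized-bregman}, namely $D_k:=D_f^{u_k}(x_k,\hat x)=f^*(u_k)-\ip{u_k}{\hat x}+f(\hat x)$, where $x_k=\nabla f^*(u_k)$. This is legitimate: Fenchel equality gives $u_k\in\partial f(x_k)$. By induction, $u_k\in\range(A^\top)$, since $u_0$ lies there and each increment $g_{i_k}(x_k)$ is a multiple of $a_{i_k}$. Hence Assumption~\ref{ass:bregman-eb} applies to every pair $(x_k,u_k)$. Strong convexity of $f$ makes $f^*$ differentiable with $\nabla f^*$ being $1/\mu_f$-Lipschitz. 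This yields the descent inequality
\[
f^*(u_{k+1})\le f^*(u_k)+\ip{x_k}{u_{k+1}-u_k}+\frac{1}{2\mu_f}\norm{u_{k+1}-u_k}_2^2 .
\]

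Substituting $u_{k+1}-u_k=-\eta_k g_{i_k}(x_k)$ and subtracting $\ip{u_{k+1}-u_k}{\hat x}$ from both sides gives
\[
D_{k+1}\le D_k-\eta_k\ip{g_{i_k}(x_k)}{x_k-\hat x}+\frac{\eta_k^2}{2\mu_f}\norm{g_{i_k}(x_k)}_2^2 .
\]
Since $\hat x$ is feasible, $a_i^\top\hat x=b_i$ for every row. Lemma~\ref{lem:row-exactness} then applies with $\hat x$ in place of $x^\star$ and $b$ in place of $y$, so $\ip{g_i(x_k)}{x_k-\hat x}=\norm{g_i(x_k)}_2^2$. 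This gives the pathwise bound
\[
D_{k+1}\le D_k-\Bigl(\eta_k-\frac{\eta_k^2}{2\mu_f}\Bigr)\norm{g_{i_k}(x_k)}_2^2 .
\]
The coefficient is positive because $0<\eta_k<2\mu_f$.

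Next we take the conditional expectation given $\cF_k$. The second-moment identity \eqref{eq:second-moment}, with $y$ replaced by $b$, gives $\EE[\norm{g_{i_k}(x_k)}_2^2\mid\cF_k]=\norm{Ax_k-b}_2^2/\norm{A}_F^2$. Assumption~\ref{ass:bregman-eb} gives $\norm{Ax_k-b}_2^2\ge\theta(\hat x)D_k\ge\bar\theta D_k$. Together these yield $\EE[D_{k+1}\mid\cF_k]\le\rho_kD_k$. Taking total expectations and iterating from $D_0$ proves \eqref{eq:selector-contraction}; the stepsizes are deterministic, so the factors $\rho_j$ pull out of the expectations. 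For the uniform factor, note that if $\eta_k\in[a,b]\subset(0,2\mu_f)$, then $\eta-\eta^2/(2\mu_f)$ is bounded below by its minimum over $[a,b]$, which is positive. Hence $\sup_k\rho_k<1$.

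The main obstacle is justifying the dual representation and the descent inequality when $f$ is extended-valued, for instance with the ReLU or softmax maps. In those cases $\partial f(x_k)$ is not a singleton, and the argument must use the specific subgradient $u_k$ rather than $\nabla f(x_k)$. Working entirely with $f^*$, which is finite and smooth everywhere, avoids this difficulty. The only remaining check is that $D_k\ge0$, which holds by the Fenchel--Young inequality, so that the product bound is meaningful.
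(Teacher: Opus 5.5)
Your proof is correct and follows the paper's argument essentially step for step. It uses the $1/\mu_f$-smoothness descent inequality for $f^*$ in the dual representation of $D_f^{u_k}(x_k,\hat x)$, row-wise exactness at the feasible point $\hat x$, the second-moment identity, and the error bound weakened to $\bar\theta$, and then iterates; your explicit checks that $u_k\in\partial f(x_k)\cap\range(A^\top)$ for all $k$ and that the factor is uniform on a compact stepsize interval fill in details the paper leaves implicit (the invariant is noted only after Theorem~\ref{thm:global-eb-sufficient}).
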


\begin{proof}
Since $f$ is $\mu_f$-strongly convex, $f^*$ has a $1/\mu_f$-Lipschitz gradient. Hence
\[
    f^*(u_{k+1})
    \le f^*(u_k)+\ip{x_k}{u_{k+1}-u_k}
    +\frac{1}{2\mu_f}\norm{u_{k+1}-u_k}_2^2.
\]
Using \eqref{eq:generalized-bregman} and $u_{k+1}-u_k=-\eta_kg_{i_k}(x_k)$ gives
\begin{align*}
    D_f^{u_{k+1}}(x_{k+1},\hat x)
    &\le D_f^{u_k}(x_k,\hat x)
    -\eta_k\ip{g_{i_k}(x_k)}{x_k-\hat x}
    +\frac{\eta_k^2}{2\mu_f}\norm{g_{i_k}(x_k)}_2^2.
\end{align*}
Because $A\hat x=b$, Lemma~\ref{lem:row-exactness} gives
\[
    D_f^{u_{k+1}}(x_{k+1},\hat x)
    \le D_f^{u_k}(x_k,\hat x)
    -\left(\eta_k-\frac{\eta_k^2}{2\mu_f}\right)
    \norm{g_{i_k}(x_k)}_2^2.
\]
Taking conditional expectation and using \eqref{eq:second-moment}, with $y=b$, yields
\[
    \EE[D_f^{u_{k+1}}(x_{k+1},\hat x)\mid\cF_k]
    \le D_f^{u_k}(x_k,\hat x)
    -\left(\eta_k-\frac{\eta_k^2}{2\mu_f}\right)
    \frac{\norm{Ax_k-b}_2^2}{\norm{A}_F^2}.
\]
Assumption~\ref{ass:bregman-eb}, weakened from $\theta$ to $\bar\theta$, gives
\[
    \norm{Ax_k-b}_2^2\ge\bar\theta D_f^{u_k}(x_k,\hat x).
\]
The conditional contraction follows. Taking total expectation and iterating proves \eqref{eq:selector-contraction}.
\end{proof}

For a constant stepsize, the scalar quantity \textcolor{black}{$\eta-\tfrac{\eta^2}{2\mu_f}$} is maximized at $\eta=\mu_f$. This is the best formal choice in the bound. The practical choice can still depend on the tightness of the error-bound constant.

\begin{remark}[Euclidean selector and classical randomized Kaczmarz]
Consider \textcolor{black}{$f(x)=\tfrac12\norm{x}_2^2$}.
Then $\mu_f=1$, $\nabla f^*(u)=u$, and \textcolor{black}{$D_f^u(x,\hat x)=\tfrac12\norm{x-\hat x}_2^2$}.
Algorithm~\ref{alg:hard} therefore becomes the relaxed randomized Kaczmarz method, and for $\eta_k=1$ it is the classical randomized Kaczmarz method. Moreover, on $\range(A^\top)$,
\[
    \norm{Ax-b}_2^2
    \ge
    2\bigl(\sigma_{\min}^+(A)\bigr)^2
    D_f^u(x,\hat x).
\]
Hence Assumption~\ref{ass:bregman-eb} holds with \textcolor{black}{$\theta(\hat x)=2\bigl(\sigma_{\min}^+(A)\bigr)^2$}.
Taking $\eta_k=1$ in Theorem~\ref{thm:selector-contraction} gives
\[
    \EE\!\left[\norm{x_k-\hat x}_2^2\right]
    \le
    \left(
        1-\frac{\bigl(\sigma_{\min}^+(A)\bigr)^2}
        {\norm{A}_F^2}
    \right)^k
    \norm{x_0-\hat x}_2^2,
\]
which is the classical randomized Kaczmarz rate~\cite{StrohmerVershynin2009}. Thus the two algorithmic branches coincide in this Euclidean consistent case.
\end{remark}

\subsection{Global sufficient conditions for the selector error bound}
\label{subsec:global-selector-eb}
We now recall conditions that make Assumption~\ref{ass:bregman-eb} global. They are useful because they show that the contraction theorem is not only a local statement for smooth geometries.

\begin{definition}[Calm subdifferential]
The subdifferential mapping $\partial f$ is calm at $\hat x$ if there exist $c,L>0$ such that
\[
    \partial f(x)\subseteq\partial f(\hat x)+L\norm{x-\hat x}_2B_2
\]
whenever $\norm{x-\hat x}_2\le c$.
\end{definition}

\begin{definition}[Linear regularity]
Assume $\partial f(\hat x)\cap\range(A^\top)\neq\varnothing$. The pair $\{\partial f(\hat x),\range(A^\top)\}$ is linearly regular if there exists $\zeta>0$ such that, for every $u\in\RR^n$,
\[
    \dist\bigl(u,\partial f(\hat x)\cap\range(A^\top)\bigr)
    \le\zeta\Bigl(
    \dist(u,\partial f(\hat x))
    +\dist(u,\range(A^\top))
    \Bigr).
\]
\end{definition}

\begin{definition}[At-most-linear growth]
The mapping $\partial f$ grows at most linearly if there exist $\rho_1,\rho_2\ge0$ such that
\[
    \norm{u}_2\le\rho_1\norm{x}_2+\rho_2
    \qquad\text{for every }x\text{ and }u\in\partial f(x).
\]
\end{definition}

\begin{theorem}[Global residual-to-Bregman bound \cite{Schpfer2022ExtendedRK}]
\label{thm:global-eb-sufficient}
Let $f:\RR^n\to\RR$ be strongly convex. Assume that $\partial f$ grows at most linearly, is calm at the unique solution $\hat x$ of \eqref{eq:hard-problem}, and that $\{\partial f(\hat x),\range(A^\top)\}$ is linearly regular. Then there exists $\theta(\hat x)>0$ such that
\[
    D_f^u(x,\hat x)
    \le\frac{1}{\theta(\hat x)}\norm{Ax-b}_2^2
\]
for every $x\in\RR^n$ and every $u\in\partial f(x)\cap\range(A^\top)$.
\end{theorem}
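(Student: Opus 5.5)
The plan is to split the argument into a local regime near $\hat x$ and a far regime, and to use linear regularity to compare $u$ with a good dual certificate in both. Throughout, fix $x\in\RR^n$ and $u\in\partial f(x)\cap\range(A^\top)$.

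\emph{Step 1: reduce to the distance.} By optimality of $\hat x$ for \eqref{eq:hard-problem}, the set $\partial f(\hat x)\cap\range(A^\top)$ is nonempty. For any $\hat u$ in this set, monotonicity-type manipulations give
\[
D_f^u(x,\hat x)=f(\hat x)-f(x)-\ip{u}{\hat x-x}\le \ip{\hat u-u}{\hat x-x}.
\]
Since $u-\hat u\in\range(A^\top)$, write $u-\hat u=A^\top w$. Then $\ip{u-\hat u}{x-\hat x}=\ip{w}{Ax-b}$. The key is therefore to bound $\norm{w}$, or more precisely $\dist(u,\partial f(\hat x)\cap\range(A^\top))$, by a multiple of $\norm{x-\hat x}_2$. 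Choose $\hat u$ as the projection of $u$ onto $\partial f(\hat x)\cap\range(A^\top)$. Linear regularity gives
\[
\norm{u-\hat u}\le\zeta\,\dist(u,\partial f(\hat x)),
\]
because $\dist(u,\range(A^\top))=0$.

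\emph{Step 2: local regime.} When $\norm{x-\hat x}_2\le c$, calmness gives $\dist(u,\partial f(\hat x))\le L\norm{x-\hat x}_2$. Hence $\norm{u-\hat u}\le\zeta L\norm{x-\hat x}_2$. Take $w$ to be the minimum-norm preimage, so $\norm{w}\le\norm{u-\hat u}/\sigma_{\min}^+(A)$. Then
\[
D_f^u(x,\hat x)\le \frac{\zeta L}{\sigma_{\min}^+}\norm{x-\hat x}_2\,\norm{Ax-b}_2.
\]
Strong convexity gives $D_f^u(x,\hat x)\ge\frac{\mu}{2}\norm{x-\hat x}^2$. Substituting $\norm{x-\hat x}\le\sqrt{2D/\mu}$ and squaring yields
\[
D\le \frac{2\zeta^2L^2}{\mu(\sigma_{\min}^+)^2}\norm{Ax-b}_2^2.
\]

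\emph{Step 3: far regime, which I expect to be the main obstacle.} When $\norm{x-\hat x}_2>c$, calmness is unavailable. Here I would use at-most-linear growth instead. It gives $\norm{u}\le\rho_1\norm{x}+\rho_2\le\rho_1\norm{x-\hat x}+\rho_1\norm{\hat x}+\rho_2$. Since $\norm{x-\hat x}>c$, the constant can be absorbed: $\norm{u}\le(\rho_1+(\rho_1\norm{\hat x}+\rho_2)/c)\norm{x-\hat x}$. The same bound holds for $\dist(u,\partial f(\hat x))\le\norm{u}+\norm{\hat u_0}$ for a fixed $\hat u_0\in\partial f(\hat x)$, again after absorbing constants using $\norm{x-\hat x}>c$. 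This produces $\dist(u,\partial f(\hat x))\le L'\norm{x-\hat x}$ globally, with $L'=\max\{L,\dots\}$. From there the Step 2 argument goes through verbatim. Finiteness of $f$ on $\RR^n$ ensures $\partial f(\hat x)$ is nonempty and bounded, so these constants are finite.

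Combining the two regimes gives $\theta(\hat x)=\mu(\sigma_{\min}^+(A))^2/(2\zeta^2L'^2)$. The delicate points to check are twofold. First, the choice of $\hat u$ must genuinely lie in $\range(A^\top)$ so that the pairing with $Ax-b$ is valid. Second, the inequality $f(\hat x)-f(x)\le\ip{\hat u}{\hat x-x}$ needs to be justified; it follows from the subgradient inequality at $\hat x$ with $\hat u\in\partial f(\hat x)$.
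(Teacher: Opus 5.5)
Your proof is correct. The paper does not prove this result itself; it recalls it from \cite{Schpfer2022ExtendedRK}. Your argument is essentially the standard proof from that Bregman--Kaczmarz literature: the subgradient inequality at $\hat x$, projection onto $\partial f(\hat x)\cap\range(A^\top)$ controlled by linear regularity, calmness for $\norm{x-\hat x}_2\le c$, linear growth with the constants absorbed via $\norm{x-\hat x}_2>c$, a minimum-norm preimage bounded through $\sigma_{\min}^+(A)$, and finally strong convexity to close the estimate.
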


The invariant required in this theorem is preserved by Algorithm~\ref{alg:hard}. Indeed, $x_k=\nabla f^*(u_k)$ implies $u_k\in\partial f(x_k)$, and every dual update adds a multiple of a row of $A$. Hence $u_k\in\range(A^\top)$ for all $k$ if it holds at initialization.

For the sparse selector
\[
    f(x)=\lambda\norm{x}_1+\frac12\norm{x}_2^2,
\]
the mirror map is soft-thresholding. The regularity conditions hold, and an explicit form of the constant is available \cite{schopfer2019linear}. If $|\hat x|_{\min}$ denotes the smallest nonzero magnitude of $\hat x$, then
\begin{equation}
\label{eq:sparse-theta}
    \frac{1}{\theta(\hat x)}
    =\frac{1}{\widetilde\sigma_{\min}(A)^2}
    \frac{|\hat x|_{\min}+2\lambda}{|\hat x|_{\min}},
\end{equation}
where
\[
    \widetilde{\sigma}_{\min}(A)
    :=
    \min\left\{
        \sigma_{\min}^{+}(A_{:,J})
        \;\middle|\;
        J\subseteq\{1,\ldots,n\},
        \ A_{:,J}\neq 0
    \right\},
\]
and $\sigma_{\min}^{+}$ denotes the smallest positive singular value, appearing in the sparse Kaczmarz analysis.

\begin{corollary}[Randomized sparse Kaczmarz rate]
\label{cor:sparse-kaczmarz-rate}
Let
\[
    f(x)=\lambda\norm{x}_1+\frac12\norm{x}_2^2,
\]
and assume that the conditions of Theorem~\ref{thm:global-eb-sufficient} hold. Let $\hat x\neq0$ be the solution of \eqref{eq:hard-problem}, and apply Algorithm~\ref{alg:hard} with $\eta_k=1$. Then
\[
    \EE\!\left[D_f^{u_k}(x_k,\hat x)\right]
    \le
    \left(
        1-
        \frac{\widetilde\sigma_{\min}(A)^2}
        {2\norm{A}_F^2}
        \frac{|\hat x|_{\min}}
        {|\hat x|_{\min}+2\lambda}
    \right)^k
    D_f^{u_0}(x_0,\hat x).
\]
This is the linear convergence factor obtained for randomized sparse Kaczmarz in~\cite{schopfer2019linear}.
\end{corollary}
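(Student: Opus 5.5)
The statement is Corollary~\ref{cor:sparse-kaczmarz-rate}. I will derive it directly from Theorem~\ref{thm:selector-contraction} by fixing the constants, and I will take the error-bound constant from Theorem~\ref{thm:global-eb-sufficient} with the explicit form \eqref{eq:sparse-theta}.

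\textbf{Step 1: strong convexity.} The function $f(x)=\lambda\norm{x}_1+\frac12\norm{x}_2^2$ is the sum of a convex function and $\frac12\norm{x}_2^2$. Hence it is $1$-strongly convex with respect to the Euclidean norm, so $\mu_f=1$. The stepsize $\eta_k=1$ lies in $(0,2\mu_f)=(0,2)$. For this choice,
\[
    \eta_k-\frac{\eta_k^2}{2\mu_f}=1-\frac12=\frac12 .
\]

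\textbf{Step 2: the error bound along the iterates.} Assumption~\ref{ass:bregman-eb} must hold for the pairs $(x_k,u_k)$ generated by Algorithm~\ref{alg:hard}. As remarked after Theorem~\ref{thm:global-eb-sufficient}, we have $u_k\in\partial f(x_k)$ because $x_k=\nabla f^*(u_k)$. Moreover, $u_k\in\range(A^\top)$ by induction from $u_0\in\range(A^\top)$, since each update adds a multiple of a row $a_{i_k}$. By hypothesis, the conditions of Theorem~\ref{thm:global-eb-sufficient} hold. Therefore the bound is valid globally, with $\theta(\hat x)$ given by \eqref{eq:sparse-theta}:
\[
    \theta(\hat x)=\widetilde\sigma_{\min}(A)^2\,\frac{|\hat x|_{\min}}{|\hat x|_{\min}+2\lambda}.
\]
The assumption $\hat x\neq0$ makes $|\hat x|_{\min}$ well defined and positive. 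The constant $\widetilde\sigma_{\min}(A)$ is positive because $A$ has no zero rows.

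\textbf{Step 3: the contraction factor.} Take $\bar\theta=\theta(\hat x)$ in Theorem~\ref{thm:selector-contraction}. The factor is then constant in $k$:
\[
    \rho=1-\frac{\widetilde\sigma_{\min}(A)^2}{2\norm{A}_F^2}\,\frac{|\hat x|_{\min}}{|\hat x|_{\min}+2\lambda}.
\]
With this $\rho$, \eqref{eq:selector-contraction} gives exactly the claimed bound, provided $\rho\in[0,1)$.

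\textbf{Main point to verify: $\rho\in[0,1)$.} This is the only nontrivial check. The inequality $\rho<1$ is clear because every factor in the subtracted term is positive. For $\rho\ge0$, it suffices to show $\widetilde\sigma_{\min}(A)^2\le\norm{A}_F^2$, since the ratio $|\hat x|_{\min}/(|\hat x|_{\min}+2\lambda)$ is at most $1$. Take $J$ to be all columns. Then $\sigma_{\min}^+(A)^2\le\norm{A}_2^2\le\norm{A}_F^2$, and $\widetilde\sigma_{\min}(A)$ is a minimum over subsets that include this one. Hence the subtracted term is at most $\frac12$, so $\rho\ge\frac12$. This gives the corollary.
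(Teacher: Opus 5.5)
Your proposal is correct and follows the route the paper intends: specialize Theorem~\ref{thm:selector-contraction} with $\mu_f=1$ and $\eta_k=1$ (so $\eta_k-\eta_k^2/(2\mu_f)=\tfrac12$), take $\bar\theta=\theta(\hat x)$ from \eqref{eq:sparse-theta}, and use the invariant $u_k\in\partial f(x_k)\cap\range(A^\top)$. Your check that $\rho\in[\tfrac12,1)$ via $\widetilde\sigma_{\min}(A)\le\sigma_{\min}^+(A)\le\norm{A}_F$ is a detail the paper leaves implicit; one small correction is that $\widetilde\sigma_{\min}(A)>0$ holds because it is a minimum of finitely many smallest positive singular values of nonzero submatrices, with $A\neq0$ making the index set nonempty, and not because $A$ has no zero rows as such.
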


Theorem~\ref{thm:global-eb-sufficient} is stated for real-valued $f$. The nonnegative quadratic and entropy potentials used below are extended-valued because they contain an indicator. We therefore derive local geometry-specific constants directly.

\subsection{Local constants for nonnegative and simplex geometries}
\label{subsec:local-geometries}
\vl{We now give explicit local error-bound constants for the nonnegative and simplex geometries. They yield selector contractions once the iterates are in the stated neighborhood.}

\begin{proposition}[Local nonnegative quadratic error bound]
\label{prop:local-nonnegative}
Let
\[
    f(x)=\ind_{\RR_+^n}(x)+\frac12\norm{x}_2^2,
    \qquad
    \nabla f^*(u)=\relu(u).
\]
Let $\hat x$ solve \eqref{eq:hard-problem}, and set \textcolor{black}{$S=\supp(\hat x)$ and $\cT_S=\{d\in\RR^n:d_{S^c}\ge0\}$}.
Assume \textcolor{black}{$\ker(A)\cap\cT_S=\{0\}$}, and define \textcolor{black}{$\sigma_S^+(A)=\inf_{\substack{d\in\cT_S\\\norm{d}_2=1}}\norm{Ad}_2>0$}.
If $x\ge0$, $u\in\partial f(x)$, and $x_S>0$, then
\begin{equation}
\label{eq:local-nonnegative}
    D_f^u(x,\hat x)
    \le
    \frac{1}{2(\sigma_S^+(A))^2}\norm{Ax-b}_2^2.
\end{equation}
Thus the local error-bound constant is \textcolor{black}{$\theta_{+,\mathrm{loc}}(\hat x)=2(\sigma_S^+(A))^2$}.
\end{proposition}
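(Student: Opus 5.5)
The plan is to compute $D_f^u(x,\hat x)$ explicitly for this nonnegative quadratic potential. Under the hypothesis $x_S>0$ it should reduce to $\tfrac12\norm{x-\hat x}_2^2$. The claim then follows from the cone restricted lower bound $\sigma_S^+(A)$ applied to the direction $d=x-\hat x$.

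\emph{Step 1 (subgradient structure).} Since $x\ge0$, we have $\partial f(x)=x+N_{\RR_+^n}(x)$. Hence every $u\in\partial f(x)$ can be written as $u=x+v$, with $v_j=0$ whenever $x_j>0$ and $v_j\le0$ whenever $x_j=0$. In particular $\ip{v}{x}=0$.

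\emph{Step 2 (the Bregman distance).} Use \eqref{eq:generalized-bregman}. Both $x$ and $\hat x$ are nonnegative, so the indicator terms vanish and
\[
    D_f^u(x,\hat x)=\tfrac12\norm{\hat x}_2^2-\tfrac12\norm{x}_2^2-\ip{x+v}{\hat x-x}
    =\tfrac12\norm{x-\hat x}_2^2-\ip{v}{\hat x}.
\]
This uses $\ip{v}{x}=0$. Next, $v_j\neq0$ only on indices with $x_j=0$. Because $x_S>0$, all such indices lie in $S^c$, where $\hat x_j=0$. Therefore $\ip{v}{\hat x}=0$ and $D_f^u(x,\hat x)=\tfrac12\norm{x-\hat x}_2^2$.

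This is the step where the locality hypothesis is really used. Without $x_S>0$, the term $-\ip{v}{\hat x}\ge0$ can be positive and is not controlled by the residual. I therefore expect this cancellation, rather than the final bound, to be the crux.

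\emph{Step 3 (error bound).} Set $d=x-\hat x$. Since $\hat x_{S^c}=0$ and $x\ge0$, we get $d_{S^c}=x_{S^c}\ge0$, so $d\in\cT_S$. Feasibility $A\hat x=b$ gives $Ax-b=Ad$. Because $\cT_S$ is a cone, the definition of $\sigma_S^+(A)$ yields $\norm{Ad}_2\ge\sigma_S^+(A)\norm{d}_2$ for every $d\in\cT_S$, including $d=0$.

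The positivity $\sigma_S^+(A)>0$ follows from the standing assumption $\ker(A)\cap\cT_S=\{0\}$. Indeed, $\cT_S\cap\{\norm{d}_2=1\}$ is compact and $d\mapsto\norm{Ad}_2$ is continuous and nonvanishing on it, so the infimum is attained and positive.

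\emph{Conclusion.} Combining the steps,
\[
    D_f^u(x,\hat x)=\tfrac12\norm{d}_2^2\le\frac{1}{2(\sigma_S^+(A))^2}\norm{Ax-b}_2^2,
\]
which is \eqref{eq:local-nonnegative}. Comparing with \eqref{eq:bregman-eb} identifies the constant $\theta_{+,\mathrm{loc}}(\hat x)=2(\sigma_S^+(A))^2$. The argument never uses $u\in\range(A^\top)$, so the bound holds for every admissible subgradient.
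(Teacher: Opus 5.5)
Your proof is correct and follows the paper's route. Both arguments use $x_S>0$ and $\hat x_{S^c}=0$ to reduce $D_f^u(x,\hat x)$ to $\tfrac12\norm{x-\hat x}_2^2$ — you through the splitting $u=x+v$ with $\ip{v}{x}=\ip{v}{\hat x}=0$, the paper coordinatewise — and then apply the restricted singular value bound to $d=x-\hat x\in\cT_S$, with your compactness argument for $\sigma_S^+(A)>0$ supplying a detail the paper leaves implicit.
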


The injectivity condition is weaker than full column rank. It only excludes null-space directions that are feasible to first order at $\hat x$. Once the coordinates in $S$ are positive in the iterate, the Bregman distance becomes Euclidean on the local cone. The proof is given in Appendix~\ref{app:local-proofs}.

\begin{proposition}[Local entropy error bound on the simplex]
\label{prop:local-simplex}
Let
\[
    f(x)=\sum_{j=1}^n x_j\log x_j+\ind_{\Delta_n}(x),
    \qquad
    \nabla f^*(u)=\softmax(u).
\]
Assume $\hat x\in\operatorname{relint}(\Delta_n)$ and set \textcolor{black}{$\alpha=\min_j\hat x_j>0$}.
Define \textcolor{black}{$T_\Delta=\{d\in\RR^n:\one^\top d=0\}$ and $\sigma_\Delta(A)=\inf_{\substack{d\in T_\Delta\\\norm{d}_2=1}}\norm{Ad}_2$}.
Assume $\ker(A)\cap T_\Delta=\{0\}$, so $\sigma_\Delta(A)>0$. Let $\delta\in(0,1)$. If $x\in\Delta_n$ satisfies \textcolor{black}{\mbox{$\norm{x-\hat x}_\infty\le\delta\alpha$}}, then
\begin{equation}
\label{eq:local-simplex}
    \KL(\hat x\Vert x)
    \le
    \frac{1}{2(1-\delta)^2\alpha\sigma_\Delta(A)^2}
    \norm{Ax-b}_2^2.
\end{equation}
Hence one may use the local constant \textcolor{black}{$\theta_{\Delta,\mathrm{loc}}(\hat x)=2(1-\delta)^2\alpha\sigma_\Delta(A)^2$}.
\end{proposition}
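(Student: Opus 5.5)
The plan is to bound $\KL(\hat x\Vert x)$ above by a weighted squared Euclidean distance, and then bound that distance by the residual using the injectivity of $A$ on $T_\Delta$. The preliminaries give $D_f^u(x,\hat x)=\KL(\hat x\Vert x)$ for $x\in\relint(\Delta_n)$. The hypothesis $\norm{x-\hat x}_\infty\le\delta\alpha$ gives $x_j\ge\hat x_j-\delta\alpha\ge(1-\delta)\alpha>0$, so $x$ lies in the relative interior and every coordinate is bounded below.

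\textbf{Step 1: $\KL$ versus $\chi^2$.} Since both $x$ and $\hat x$ are in $\Delta_n$, we have
\[
\KL(\hat x\Vert x)=\sum_j \hat x_j\log\frac{\hat x_j}{x_j}\le\log\Bigl(\sum_j\frac{\hat x_j^2}{x_j}\Bigr)\le\sum_j\frac{\hat x_j^2}{x_j}-1=\sum_j\frac{(\hat x_j-x_j)^2}{x_j}.
\]
The first inequality is Jensen applied to the concave logarithm under the weights $\hat x_j$. The second uses $\log t\le t-1$. The last equality uses $\sum_j x_j=\sum_j\hat x_j=1$. With $x_j\ge(1-\delta)\alpha$ this yields
\[
\KL(\hat x\Vert x)\le\frac{1}{(1-\delta)\alpha}\norm{x-\hat x}_2^2.
\]

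\textbf{Step 2: error bound.} Set $d=x-\hat x$. Then $\one^\top d=0$, so $d\in T_\Delta$. Since $Ad=Ax-b$, the definition of $\sigma_\Delta(A)$ gives
\[
\norm{d}_2^2\le\sigma_\Delta(A)^{-2}\norm{Ax-b}_2^2.
\]
Combining with Step 1 gives
\[
\KL(\hat x\Vert x)\le\frac{1}{(1-\delta)\alpha\,\sigma_\Delta(A)^2}\norm{Ax-b}_2^2.
\]

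\textbf{Main obstacle: matching the stated constant.} The stated constant is $1/\bigl(2(1-\delta)^2\alpha\sigma_\Delta(A)^2\bigr)$. The bound from Steps 1--2 already implies it whenever $2(1-\delta)\le1$, but not in general, so I would sharpen Step 1 to recover the factor $\tfrac12$.

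First attempt: Taylor's theorem along the segment $x_t=x+t(\hat x-x)$. This gives
\[
\KL(\hat x\Vert x)=\sum_j\Bigl(\hat x_j\log\frac{\hat x_j}{x_j}-\hat x_j+x_j\Bigr),
\]
and each summand equals $\tfrac12(\hat x_j-x_j)^2/\xi_j$ for some $\xi_j$ between $x_j$ and $\hat x_j$, hence with $\xi_j\ge(1-\delta)\alpha$. Note that the linear terms sum to zero on the simplex. This gives
\[
\KL(\hat x\Vert x)\le\frac{1}{2(1-\delta)\alpha}\norm{d}_2^2,
\]
which is stronger than needed because $(1-\delta)^2\le(1-\delta)$.

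The key point to verify is the Lagrange remainder for $\phi(s)=\hat x_j\log(\hat x_j/s)-\hat x_j+s$ expanded around $s=\hat x_j$. One has $\phi(\hat x_j)=0$, $\phi'(\hat x_j)=0$, and $\phi''(s)=\hat x_j/s^2$. So the remainder is
\[
\frac{\hat x_j}{2\xi_j^2}(x_j-\hat x_j)^2\le\frac{\hat x_j}{2(1-\delta)^2\alpha^2}(x_j-\hat x_j)^2.
\]
Bounding $\hat x_j$ by $1$ would lose a factor $\alpha$. Instead, use $\hat x_j/\xi_j\le\hat x_j/(\hat x_j-\delta\alpha)\le1/(1-\delta)$, which holds since $\hat x_j\ge\alpha$. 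Then
\[
\frac{\hat x_j}{\xi_j^2}\le\frac{1}{(1-\delta)\,\xi_j}\le\frac{1}{(1-\delta)^2\alpha}.
\]
This reproduces exactly the stated factor $(1-\delta)^2\alpha$, so I would present this Taylor-remainder argument as the proof and then finish with Step 2.
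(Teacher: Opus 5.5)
Your final Taylor-remainder argument is correct and is essentially the paper's proof. The paper sets $t_j=x_j/\hat x_j$, expands $-\log t$ around $1$ with the curvature controlled by $t_j\ge1-\delta$, and multiplies by $\hat x_j$; this is your remainder bound $\hat x_j(x_j-\hat x_j)^2/(2\xi_j^2)\le(x_j-\hat x_j)^2/(2(1-\delta)^2\alpha)$ after the change of variables $s=\hat x_j t$, followed by the same restricted-singular-value step. Your ``first attempt'' claim is also valid, but for a different reason than your $\phi$-computation suggests. It holds if you expand in the first argument, $p\mapsto p\log(p/x_j)-p+x_j$ around $p=x_j$, whose second derivative is $1/p$; that is what moving along $x+t(\hat x-x)$ amounts to, whereas $\phi$ expands in the second argument and so does not verify that claim. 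The first-argument route gives the sharper constant $2(1-\delta)\alpha\,\sigma_\Delta(A)^2$, which implies the stated one.
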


A convenient equivalent condition for simplex injectivity is
\[
    \operatorname{rank}
    \begin{pmatrix}
        A\\
        \one^\top
    \end{pmatrix}=n.
\]
The proof of Proposition~\ref{prop:local-simplex} is given in Appendix~\ref{app:local-proofs}. If the neighborhood is forward invariant, Theorem~\ref{thm:selector-contraction} applies with $\theta_{\Delta,\mathrm{loc}}$. The proposition alone does not establish forward invariance.

\subsection{Summary of the theoretical hierarchy}
The results can now be read from general to specific.
\begin{enumerate}
    \item Proposition~\ref{prop:one-step} depends only on convexity and Bregman geometry.
    \item Theorems~\ref{thm:det-soft} and~\ref{thm:stoch-soft} add relative smoothness and an exact or unbiased oracle.
    \item Theorem~\ref{thm:ls-global} uses the exact least-squares second moment and allows a nonzero optimal residual.
    \item Theorem~\ref{thm:ls-linear} adds interpolation and a residual error bound to obtain last-iterate linear convergence.
    \item Theorem~\ref{thm:selector-contraction} concerns a different target: a strongly convex selector under hard equations.
    \item Theorem~\ref{thm:global-eb-sufficient} and Propositions~\ref{prop:local-nonnegative}--\ref{prop:local-simplex} explain when the selector error bound is available.
\end{enumerate}
This hierarchy is the reason for keeping two algorithmic branches. The master formulation is unified, while the strongest rates depend on the target regime and the geometry.

\section{Numerical experiments}
\label{sec:numerics}
\vl{The experiments complement the theory and show how the row-action updates behave in practice. They address three questions.}
\begin{enumerate}
    \item Does noisy constrained least squares show the stepsize trade-off predicted by Theorem~\ref{thm:ls-global}?
    \item Does the entropy selector satisfy the local residual-to-KL relation from Proposition~\ref{prop:local-simplex}?
    \item Can the soft entropic row step be used inside a structured matrix factorization scheme?
\end{enumerate}
\vl{The first two tests directly examine the theoretical predictions. The third shows how the soft entropic row step can be embedded in a simplex-structured NMF scheme and compares the resulting factors with an optimized deterministic reference.}

All figures and numerical results reported in this section can be
reproduced using the Python code available at
\url{https://github.com/vleplat/randomized-row-action-framework}.
The repository contains standalone experiment scripts, reusable
implementations of the row-action methods, and the corresponding
notebooks.

\begin{table}[t]
\centering
\caption{Summary of the numerical experiments.}
\label{tab:num-summary}
\small
\begin{tabularx}{\textwidth}{@{}L{0.23\textwidth}L{0.20\textwidth}L{0.22\textwidth}Y@{}}
\toprule
Experiment & Dimensions & Branch & Main quantity \\
\midrule
Noisy NNLS & $A\in\RR^{1000\times200}$ & Soft primal & weighted and last-iterate objective gaps \\
Entropy selector & $A\in\RR^{1000\times200}$ & Hard equality & KL distance, residual, and error-bound ratio \\
Simplex NMF & $X\in\RR_+^{159\times2500}$ & Soft block updates & reconstruction error and factor interpretation \\
\bottomrule
\end{tabularx}
\end{table}

\subsection{Noisy nonnegative least squares}
\label{subsec:noisy-nnls}
We consider
\begin{equation}
\label{eq:nnls-test}
    \min_{x\ge0} f(x):=\frac12\norm{Ax-b}_2^2.
\end{equation}
The matrix $A\in\RR^{1000\times200}$ has independent Gaussian entries scaled by $1/\sqrt{200}$. The nonnegative reference vector has $25\%$ nonzero entries, sampled uniformly from $[0.5,2]$. We set
\begin{equation}
\label{eq:nnls-noise}
    b=Ax_{\rm true}+e,
    \qquad
    \frac{\norm{e}_2}{\norm{Ax_{\rm true}}_2}=0.05.
\end{equation}
Thus the system is inconsistent and $f^\star>0$. A high-accuracy deterministic NNLS solver provides the reference solution and the value $f^\star$.

We apply \eqref{eq:projected-cls} with $C=\RR_+^{200}$. The row is sampled with probability proportional to its squared norm. One epoch contains $1000$ row updates. We compare four stepsize rules:
\[
    \eta_k=0.9,
    \qquad
    \eta_k=0.2,
    \qquad
    \eta_k=\frac{\eta_0}{\sqrt T},
    \qquad
    \eta_k=\frac{\eta_0}{\sqrt{k+1}},
\]
with $\eta_0=0.9$ for the two square-root rules. In the horizon rule, $T$ is the total number of row updates in the run, so the stepsize is constant inside that run.

Theorem~\ref{thm:ls-global} controls the weighted iterate
\begin{equation}
\label{eq:num-weighted-iterate}
    \bar x_T=
    \frac{\sum_{k=0}^{T-1}\eta_kx_k}
         {\sum_{k=0}^{T-1}\eta_k}.
\end{equation}
Figure~\ref{fig:nnls-theory} reports the corresponding objective gap and the horizon experiment. For the second panel, we use
\[
    T\in\{10^4,2.5\cdot10^4,5\cdot10^4,10^5,
    2.5\cdot10^5,5\cdot10^5,10^6,2\cdot10^6\}.
\]
These values correspond to $10$ through $2000$ epochs. The final weighted gap decreases as the horizon grows. The empirical curve is at least as steep as the displayed $T^{-1/2}$ reference over the tested range.

\begin{figure}[ht!]
\centering
\begin{subfigure}[t]{0.49\textwidth}
    \centering
    \includegraphics[width=\linewidth]{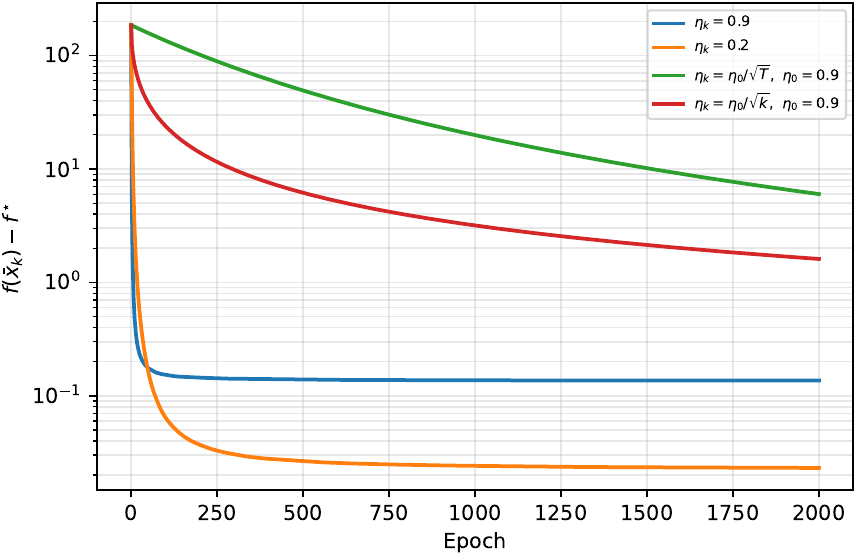}
    \caption{Weighted-average objective gap.}
\end{subfigure}
\hfill
\begin{subfigure}[t]{0.49\textwidth}
    \centering
    \includegraphics[width=\linewidth]{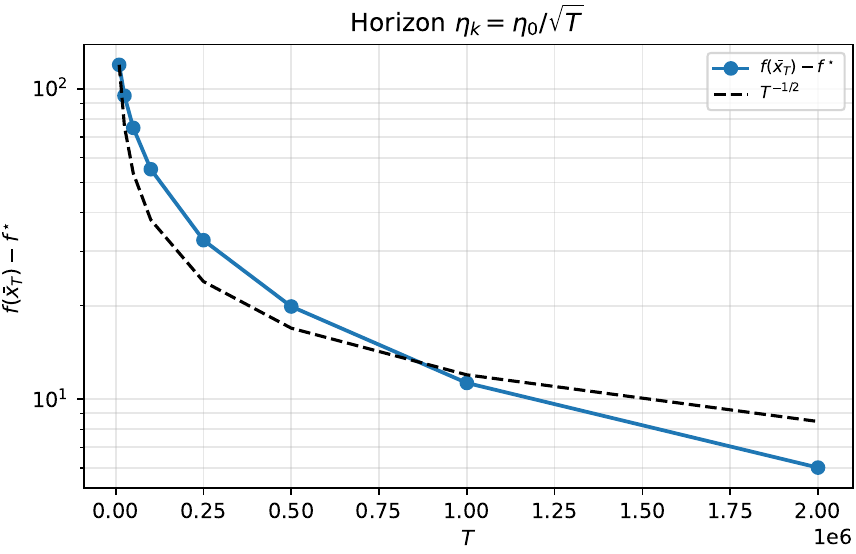}
    \caption{Final gap versus horizon.}
\end{subfigure}
\caption{Noisy NNLS. The left panel compares constant and decreasing stepsizes. The right panel uses $\eta_k=\eta_0/\sqrt T$ and compares the final weighted gap with a $T^{-1/2}$ reference.}
\label{fig:nnls-theory}
\end{figure}

The two constant stepsizes show the expected noisy-regime trade-off. The larger step decreases faster at the beginning but settles in a larger neighborhood. The smaller step is slower and reaches a smaller final neighborhood. The horizon rule reduces the final stochastic term when the computational budget grows, but it can be conservative from the first iteration of a long run.

Figure~\ref{fig:nnls-last} shows the last-iterate gap. This quantity is not controlled directly by Theorem~\ref{thm:ls-global}; it is reported as a practical diagnostic. The diminishing rule takes larger steps during the early part of the run than the horizon-tuned constant and then anneals. It therefore combines a faster transient with continued reduction of the stochastic error in this experiment.

\begin{figure}[ht!]
    \centering
    \includegraphics[width=0.72\textwidth]{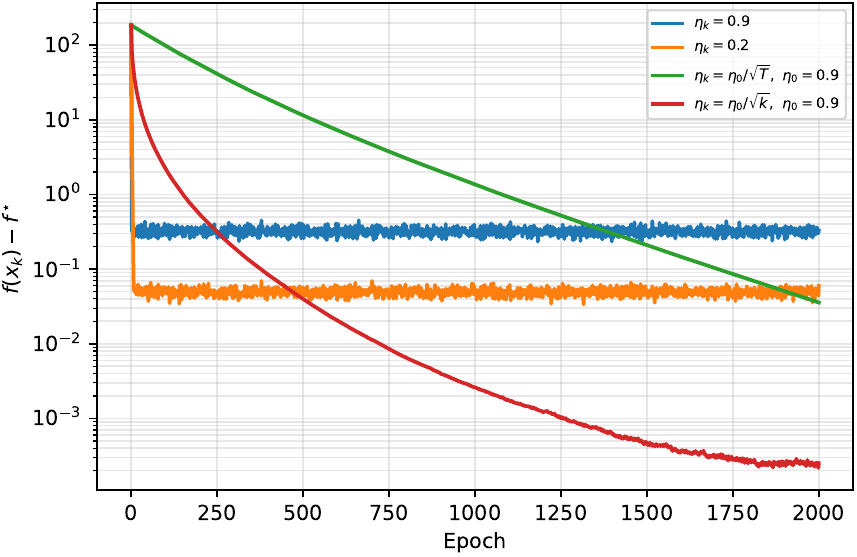}
    \caption{Noisy NNLS: last-iterate objective gap. Constant stepsizes reach a stepsize-dependent neighborhood, while the decreasing rules continue to reduce the error.}
    \label{fig:nnls-last}
\end{figure}

\vl{The last-iterate experiment complements the ergodic theory. With a constant stepsize, the method makes fast initial progress and then reaches a stepsize-dependent neighborhood. With a decreasing stepsize, the objective gap continues to decrease.}

\subsection{Entropy selector on the simplex}
\label{subsec:entropy-selector}
We next test Branch II with
\begin{equation}
\label{eq:entropy-test}
    \min_{x\in\Delta_n}\sum_{j=1}^n x_j\log x_j
    \qquad\text{subject to}\qquad
    Ax=b.
\end{equation}
We generate $A\in\RR^{1000\times200}$ with Gaussian entries scaled by $1/\sqrt{200}$. An interior point $\hat x$ is generated by applying a softmax map to a small random vector, and we set $b=A\hat x$. The matrix is injective on the simplex tangent space. It has full column rank in the reported test, so the feasible point is unique. 
\zy{This choice allows us to focus on the local geometry and contraction mechanism.}

The entropy mirror map is \textcolor{black}{\mbox{$x_k=\softmax(u_k)$}}.
\vl{We use Algorithm~\ref{alg:hard} with $\eta=1$ and choose the initial dual vector so that $x_0$ lies in the neighborhood required by Proposition~\ref{prop:local-simplex}. This setup allows the experiment to illustrate directly the local contraction mechanism described by the proposition.}

Figure~\ref{fig:selector-decay} reports the KL distance and the squared feasibility residual. Both quantities decrease steadily. Their nearly linear profiles on the semilogarithmic scale are consistent with a local linear regime.

\begin{figure}[ht!]
\centering
\begin{subfigure}[t]{0.49\textwidth}
    \centering
    \includegraphics[width=\linewidth]{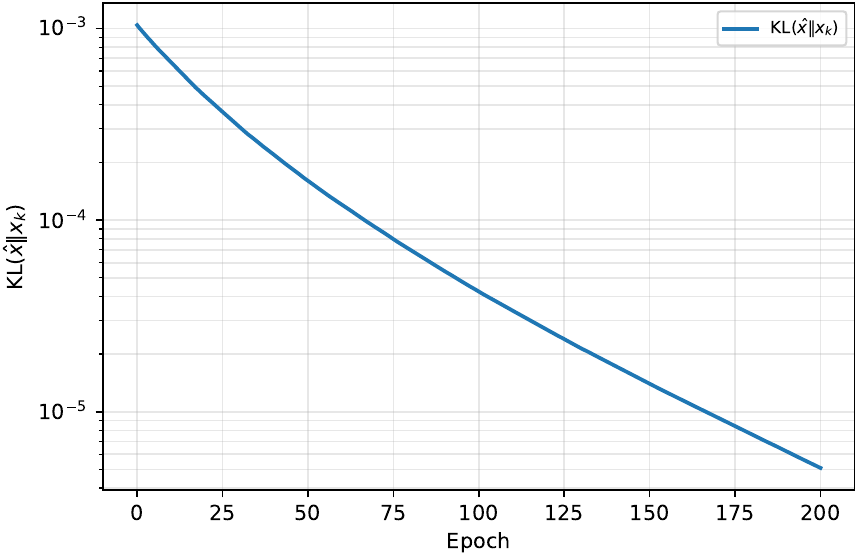}
    \caption{Entropy Bregman distance.}
\end{subfigure}
\hfill
\begin{subfigure}[t]{0.49\textwidth}
    \centering
    \includegraphics[width=\linewidth]{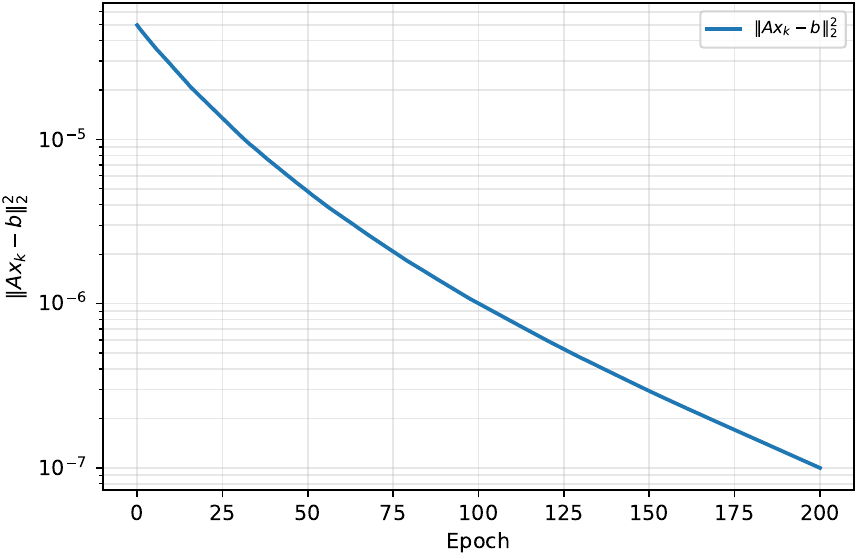}
    \caption{Squared feasibility residual.}
\end{subfigure}
\caption{Entropy Bregman selector on the simplex. The KL distance and the feasibility residual decrease in the local regime.}
\label{fig:selector-decay}
\end{figure}

Let \textcolor{black}{$\alpha=\min_j\hat x_j$ and $\delta=\tfrac{\norm{x_0-\hat x}_\infty}{\alpha}$}.
The initialization is chosen so that $\delta<1$. Proposition~\ref{prop:local-simplex} predicts
\begin{equation}
\label{eq:num-ratio-bound}
    \frac{\KL(\hat x\Vert x_k)}{\norm{Ax_k-b}_2^2}
    \le\frac{1}{\theta_\delta},
    \qquad
    \theta_\delta=2(1-\delta)^2\alpha\sigma_\Delta(A)^2,
\end{equation}
provided the iterates stay in the neighborhood. Figure~\ref{fig:selector-ratio} compares the empirical ratio with $1/\theta_\delta$ and with the limiting reference obtained by setting $\delta=0$. The empirical values remain below both displayed references.

\begin{figure}[ht!]
    \centering
    \includegraphics[width=0.72\textwidth]{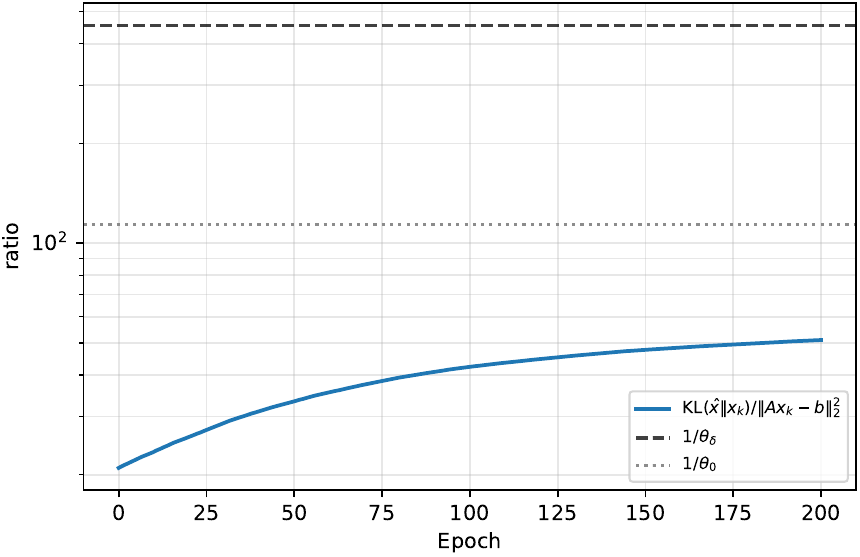}
    \caption{Entropy selector: the empirical ratio $\KL(\hat x\Vert x_k)/\norm{Ax_k-b}_2^2$ remains below the local theoretical reference levels.}
    \label{fig:selector-ratio}
\end{figure}

This experiment illustrates the full local mechanism used in Theorem~\ref{thm:selector-contraction}: the residual decays, the Bregman distance decays, and the residual controls the Bregman distance through a computable restricted singular value.

\subsection{Simplex-structured NMF}
\label{subsec:nmf}
We finally consider
\begin{equation}
\label{eq:simplex-nmf}
    \min_{W,H}\frac12\norm{X-WH}_F^2
    \qquad\text{subject to}\qquad
    W\ge0,
    \quad h_j\in\Delta_r.
\end{equation}
In applications such as hyperspectral unmixing, the columns of \(W\) represent
endmember signatures, while the columns of \(H\) represent abundance vectors~\cite{Gillis2020NMF}.
The simplex constraint encodes the fact that each data point is represented as
a convex combination of the endmembers.
The problem is nonconvex in $(W,H)$ and convex in each block separately. We use a two-block alternating scheme. The first two methods below update the blocks by randomized entropic row-action steps. We compare them with a deterministic simplex-constrained NMF reference.

\paragraph{Single-row method with fixed stepsizes}
With $W$ fixed, let $w_i^\top$ be row $i$ of $W$. For a sampled row $i$, define
\[
    \rho_i=w_i^\top H-X_{i,:},
    \qquad
    S_i^H=\frac{w_i\rho_i}{\norm{w_i}_2^2}.
\]
The abundance update is
\begin{equation}
\label{eq:H-update}
    H^+=\operatorname{NormalizeColumns}
    \bigl(H\odot\exp(-\eta_HS_i^H)\bigr).
\end{equation}
The column normalization is part of the entropy proximal map. It preserves $h_j\in\Delta_r$ without a Euclidean projection.

With $H$ fixed, let $h_j$ and $x_j$ denote column $j$ of $H$ and $X$. For a sampled column $j$, define
\[
    r_j=Wh_j-x_j,
    \qquad
    S_j^W=\frac{r_jh_j^\top}{\norm{h_j}_2^2}.
\]
The basis update is
\begin{equation}
\label{eq:W-update}
    W^+=W\odot\exp(-\eta_WS_j^W).
\end{equation}
A small positive floor is used only to avoid numerical underflow. In the experiment, we use the fixed values
\[
    \eta_H=0.1,
    \qquad
    \eta_W=0.01.
\]
One outer iteration contains $m$ sampled row updates for $H$ and $n$ sampled column updates for $W$.

\paragraph{Mini-batch row-action method}
The mini-batch variant replaces one sampled row or column by a small block. Let
$I\subset\{1,\ldots,m\}$ and $J\subset\{1,\ldots,n\}$ satisfy
$|I|=|J|=q$. Define
\[
    R_I^H=W_{I,:}H-X_{I,:},
    \qquad
    G_I^H=W_{I,:}^\top R_I^H,
    \qquad
    L_I^H=\norm{W_{I,:}}_2^2,
\]
and
\[
    R_J^W=WH_{:,J}-X_{:,J},
    \qquad
    G_J^W=R_J^W H_{:,J}^\top,
    \qquad
    L_J^W=\norm{H_{:,J}}_2^2.
\]
The corresponding updates are
\begin{align}
\label{eq:H-minibatch-update}
    H^+
    &=
    \operatorname{NormalizeColumns}
    \left(
        H\odot
        \exp\left(-\frac{\eta_H}{L_I^H}G_I^H\right)
    \right),\\
\label{eq:W-minibatch-update}
    W^+
    &=
    W\odot
    \exp\left(-\frac{\eta_W}{L_J^W}G_J^W\right).
\end{align}
The case $q=1$ recovers \eqref{eq:H-update}--\eqref{eq:W-update}. We use $q=8$, which gave the best speed--quality compromise in a preliminary ablation. One outer iteration processes $\lceil m/q\rceil$ abundance blocks and $\lceil n/q\rceil$ basis blocks, so that the amount of accessed data is nearly the same as for the single-row method. The derivation of \eqref{eq:H-minibatch-update}--\eqref{eq:W-minibatch-update} is given in Appendix~\ref{app:nmf-minibatch-derivation}.

\paragraph{Deterministic reference}
As a reference, we use the deterministic simplex-constrained NMF implementation provided in the Python companion repository to the book of Gillis~\cite{Gillis2020NMF}.\footnote{\url{https://github.com/vleplat/nmfbook}}
The method uses two-block coordinate descent. Each convex block subproblem is handled by a Nesterov-type accelerated projected-gradient routine, with projection onto the nonnegative orthant for $W$ and onto the product of simplices for $H$. We use the model enforcing $\one^\top h_j=1$, with zero volume penalty, so that the comparison concerns the same reconstruction objective and constraints as \eqref{eq:simplex-nmf}.

\vl{This experiment applies the proposed convex row-action block updates inside a nonconvex alternating NMF scheme. It tests their practical behavior and the quality of the recovered factors.}

\begin{remark}[Stepsize selection]
\label{rem:nmf-stepsizes}
\vl{We deliberately use fixed stepsizes in the two row-action methods. Recent work develops entropy-specific Polyak stepsizes for linear systems and proves convergence without imposing a very small constant step~\cite{malitsky2026entropicmirrordescentlinear}. Adapting these rules to randomized block updates would require accounting for the changing block objective and the unknown optimal block value. This is a natural direction for further work. Here, the fixed-step experiment shows that simple randomized entropic updates can recover meaningful factors close to those obtained by an optimized deterministic reference.}
\end{remark}

\paragraph{Moffett Field data}
We use a $50\times50$ region of the AVIRIS Moffett Field hyperspectral image. After removing unsuitable bands, the data matrix has size $159\times2500$. The scene contains three dominant materials, commonly interpreted as water, vegetation, and soil \cite{5256272,10.1162/neco_a_01679}. The water component is known
to be difficult to estimate because its spectral signature has significantly smaller norm
than the other materials. Figure~\ref{fig:moffett-data} shows the selected region.

\begin{figure}[ht!]
    \centering
    \includegraphics[width=0.82\linewidth]{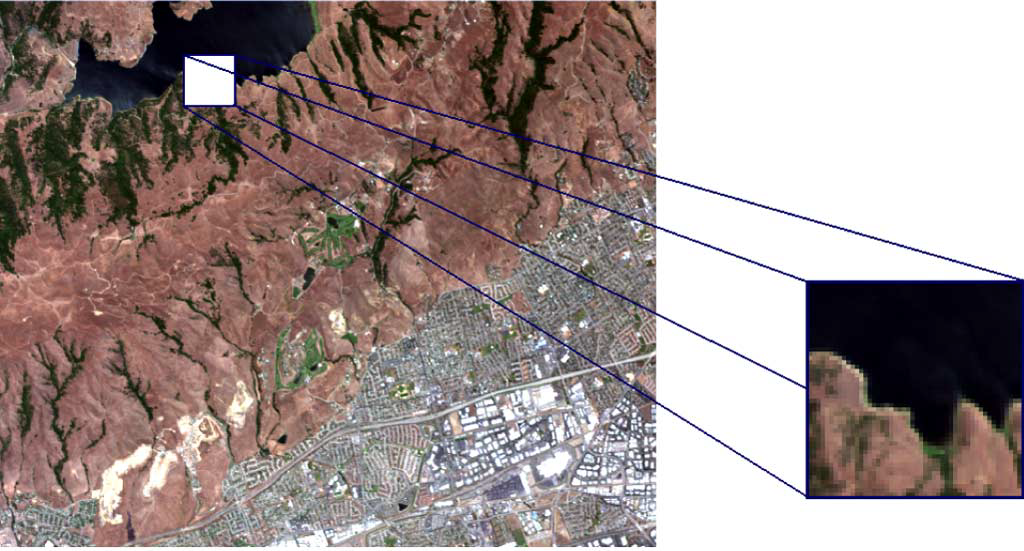}
    \caption{AVIRIS Moffett Field scene and the selected region of interest. The image is reproduced from Dobigeon et al.~\cite{5256272}.}
    \label{fig:moffett-data}
\end{figure}

\hsiupdate{We use rank $r=3$ and $200$ outer iterations for all three methods, and repeat the experiment with three random initializations. Within each run, all methods start from the same factors. Table~\ref{tab:moffett-comparison} reports a representative run. The mini-batch method reaches a smaller reconstruction error in less time than the single-row method. The deterministic reference gives the smallest reconstruction error and the shortest runtime.}

\begin{table}[ht!]
\centering

\caption{\hsiupdate{Moffett Field comparison for a representative initialization. All three methods use the same initial factors. Elapsed times are medians of three repeated runs with single-threaded BLAS; one-time JIT compilation is excluded.}}
\label{tab:moffett-comparison}
\begin{tabular}{lccc}
\toprule
Method & Objective & Relative error & \hsiupdate{Elapsed time (s)}\\
\midrule
Single row, fixed steps & $34.82$ & $4.13\times10^{-2}$ & \hsiupdate{$2.186$}\\
Mini-batch $q=8$ & $27.20$ & $3.65\times10^{-2}$ & \hsiupdate{$1.571$}\\
Deterministic reference & $21.58$ & $3.25\times10^{-2}$ & \hsiupdate{$1.352$}\\
\bottomrule
\end{tabular}
\end{table}

\hsiupdate{Across the three tested initializations, the median elapsed times are $2.170$~s for single-row, $1.570$~s for mini-batch $q=8$, and $1.333$~s for the deterministic reference. Thus, the mini-batch method takes approximately $18\%$ more time than the deterministic reference in this comparison.} Figure~\ref{fig:moffett-abundance-comparison} compares the abundance maps. The maps obtained by our two methods are close to the deterministic reference and recover the same main spatial structures. Figure~\ref{fig:moffett-spectral-comparison} gives the corresponding spectral signatures.

\begin{figure}[ht!]
    \centering
    \includegraphics[width=0.92\linewidth]{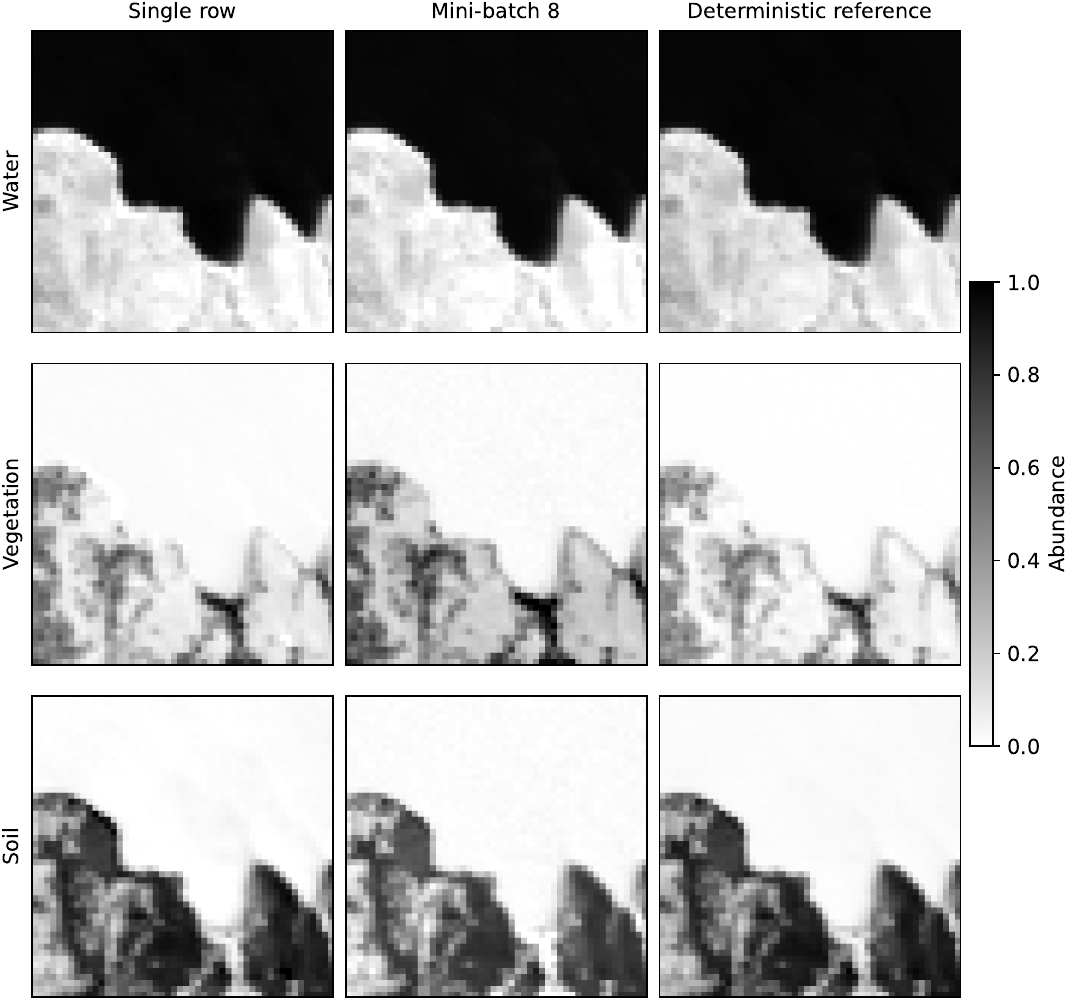}
    \caption{\vl{Moffett Field abundance maps. From left to right: fixed-step single-row method, mini-batch method with $q=8$, and deterministic reference. The components are aligned to the reference by spectral correlation.}}
    \label{fig:moffett-abundance-comparison}
\end{figure}

\begin{figure}[ht!]
    \centering
    \includegraphics[width=0.92\linewidth]{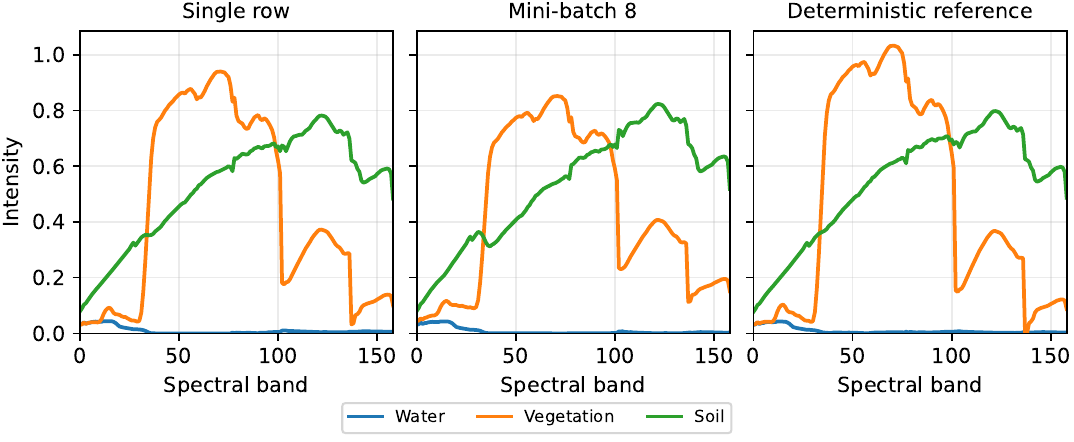}
    \caption{\vl{Estimated Moffett Field spectral signatures. From left to right: fixed-step single-row method, mini-batch method with $q=8$, and deterministic reference.}}
    \label{fig:moffett-spectral-comparison}
\end{figure}
\hsiupdate{Figure~\ref{fig:moffett-convergence-comparison} compares the relative reconstruction error against outer iterations and elapsed time. The mini-batch method gives better progress per unit of time than single-row, while the deterministic reference reaches the lowest reconstruction error in the shortest time.}

\begin{figure}[ht!]
    \centering
    \includegraphics[width=0.94\linewidth]{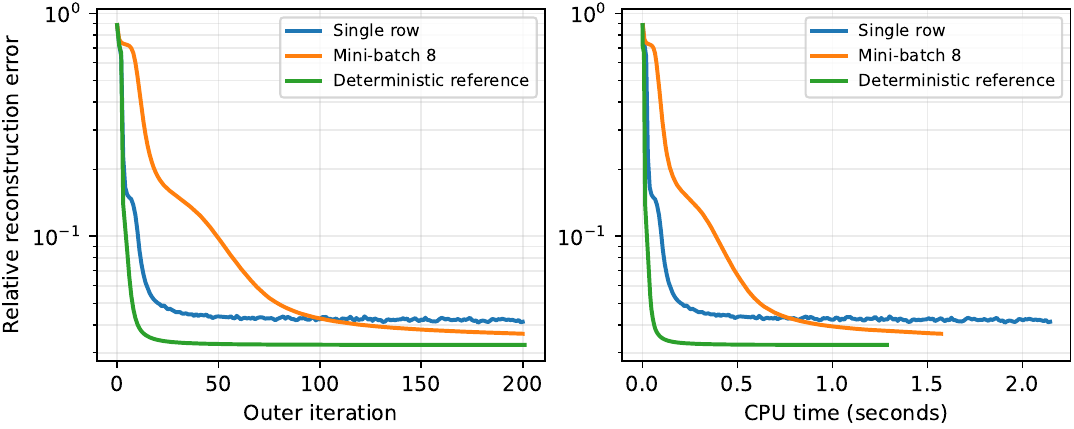}
    \caption{\hsiupdate{Relative reconstruction error for the representative initialization in Table~\ref{tab:moffett-comparison}, plotted against outer iterations and elapsed wall-clock time.}}
    \label{fig:moffett-convergence-comparison}
\end{figure}

Across the three tested initializations, the fixed single-row method was the most stable, while the mini-batch method achieved the best row-action result in two cases. This shows that mini-batching can substantially improve the practical performance, although the nonconvex NMF problem remains sensitive to initialization. 

Overall, both row-action variants recover high-quality and interpretable factors. In particular, the abundance maps and spectral signatures closely match those obtained by the optimized deterministic reference, while the best mini-batch results also achieve a comparable reconstruction error.

\section{Conclusion}
\label{sec:conclusion}

\vl{We introduced a unified model for randomized row-action methods with convex constraints and Bregman geometry. The framework covers soft constrained data fitting and hard equality-constrained selectors. In important special cases, it recovers established projected, classical, sparse, and entropic Kaczmarz-type methods. This common view separates the shared row-action structure from the assumptions needed for the two convergence analyses. The soft branch uses primal Bregman-proximal row steps, while the hard branch uses dual Bregman--Kaczmarz updates and a mirror map. Under relative smoothness, we obtain deterministic and stochastic ergodic bounds. For constrained least squares, the row structure gives sharper noisy guarantees and, under interpolation and a residual error bound, linear convergence of the projected iterates. For equality-constrained selectors, a residual-to-Bregman bound gives expected linear contraction. We also provide global and local conditions under which these estimates hold.}

\vl{The current framework is designed for row- or block-decomposable data terms with tractable local Bregman-proximal maps. In this setting, the selector analysis covers hard equality-constrained models and gives explicit local results for the nonnegative and simplex geometries. The simplex-NMF experiment also shows how the same row-action updates can be used inside a nonconvex alternating factorization scheme.}

A natural direction is to use these methods as inner solvers in block-coordinate algorithms for matrix and tensor approximation. The same ideas may also lead to stochastic or streaming methods for nonlinear factorizations, including ReLU-type models such as $\max(WH,0)$ \cite{Saul2022Nonlinear,GillisPorcelliSeraghiti2026}. The present framework provides a starting point for studying the stability and convergence of such extensions.

\appendix
\section{Proofs for the general soft branch}
\label{app:general-soft-proofs}
We give the proofs of Theorems~\ref{thm:det-soft} and~\ref{thm:stoch-soft}. They are placed here to keep the main theorem flow focused on the row-action specializations.

\subsection{Proof of Theorem~\ref{thm:det-soft}}
By relative smoothness,
\[
    \Phi(x_{k+1})
    \le
    \Phi(x_k)+\ip{\nabla\Phi(x_k)}{x_{k+1}-x_k}
    +L_\Phi D_h(x_{k+1},x_k).
\]
Convexity also gives
\[
    \Phi(x_k)-\Phi(x)
    \le\ip{\nabla\Phi(x_k)}{x_k-x}.
\]
Combining these two inequalities yields
\begin{equation}
\label{eq:app-det-combine}
    \Phi(x_{k+1})-\Phi(x)
    \le
    \ip{\nabla\Phi(x_k)}{x_{k+1}-x}
    +L_\Phi D_h(x_{k+1},x_k).
\end{equation}
Multiply \eqref{eq:app-det-combine} by $\eta_k$ and combine it with \eqref{eq:exact-one-step}, using $s_k=\nabla\Phi(x_k)$. We obtain
\begin{align*}
    \eta_k(F(x_{k+1})-F(x))
    &\le
    D_h(x,x_k)-D_h(x,x_{k+1})
    -(1-\eta_kL_\Phi)D_h(x_{k+1},x_k).
\end{align*}
Since $\eta_k\le1/L_\Phi$, the last term is nonpositive and may be dropped. Summing from $k=0$ to $K-1$ gives \eqref{eq:det-soft-sum}. The Bregman terms telescope. Jensen's inequality then gives
\[
    F(\bar x_K)
    \le\frac{1}{S_K}\sum_{k=0}^{K-1}\eta_kF(x_{k+1}),
\]
which proves \eqref{eq:det-soft-rate}.

\subsection{Proof of Theorem~\ref{thm:stoch-soft}}
Start again from \eqref{eq:exact-one-step}, now with
\[
    s_k=\nabla\Phi(x_k)+\delta_k.
\]
Using the same relative-smoothness estimate as in \eqref{eq:app-det-combine},
\begin{align}
\label{eq:app-stoch-one-step}
    \eta_k(F(x_{k+1})-F(x))
    &\le
    D_h(x,x_k)-D_h(x,x_{k+1})
    -(1-\eta_kL_\Phi)D_h(x_{k+1},x_k)\\
    &\quad-\eta_k\ip{\delta_k}{x_{k+1}-x}.\notag
\end{align}
Decompose \textcolor{black}{$\ip{\delta_k}{x_{k+1}-x}=\ip{\delta_k}{x_{k+1}-x_k}+\ip{\delta_k}{x_k-x}$}.
The second term has zero conditional expectation. Strong convexity gives
\[
    D_h(x_{k+1},x_k)
    \ge\frac{\mu_h}{2}\norm{x_{k+1}-x_k}^2.
\]
Hence Young's inequality yields
\[
    \textcolor{black}{\eta_k\norm{\delta_k}_*\norm{x_{k+1}-x_k}
    -(1-\eta_kL_\Phi)D_h(x_{k+1},x_k)
    \le\frac{\eta_k^2}{2\mu_h(1-\eta_kL_\Phi)}\norm{\delta_k}_*^2.}
\]
Take conditional expectation in \eqref{eq:app-stoch-one-step}, use Assumption~\ref{ass:bounded-variance}, and then take total expectation. Since $\eta_k\le\bar\eta$,
\[
    \textcolor{black}{\eta_k\EE[F(x_{k+1})-F(x)]
    \le\EE[D_h(x,x_k)-D_h(x,x_{k+1})]
    +\frac{\eta_k^2\sigma^2}{2\mu_h(1-\bar\eta L_\Phi)}.}
\]
Summing proves \eqref{eq:stoch-soft-sum}. Jensen's inequality gives \eqref{eq:stoch-soft-rate}.

\section{Proofs of the local selector bounds}
\label{app:local-proofs}

\subsection{Proof of Proposition~\ref{prop:local-nonnegative}}
For \textcolor{black}{$f(x)=\ind_{\RR_+^n}(x)+\tfrac12\norm{x}_2^2$}, we have \textcolor{black}{\mbox{$\partial f(x)=x+N_{\RR_+^n}(x)$}}.
Since $x_S>0$, the normal-cone term vanishes on $S$, so $u_S=x_S$. Since $\hat x_{S^c}=0$, a coordinate-wise calculation gives \textcolor{black}{\mbox{$D_f^u(x,\hat x)=\tfrac12\norm{x-\hat x}_2^2$}}.
Indeed, on a coordinate $j\in S$ this is the ordinary quadratic Bregman distance. On $S^c$, either $x_j>0$ and $u_j=x_j$, or $x_j=0$ and the contribution is zero.

Moreover, $x-\hat x\in\cT_S$. Therefore
\[
    \norm{Ax-b}_2^2
    =\norm{A(x-\hat x)}_2^2
    \ge(\sigma_S^+(A))^2\norm{x-\hat x}_2^2.
\]
Combining the two identities proves \eqref{eq:local-nonnegative}.

\subsection{Proof of Proposition~\ref{prop:local-simplex}}
Set \textcolor{black}{$t_j=\tfrac{x_j}{\hat x_j}$}.
The neighborhood assumption gives $t_j\ge1-\delta>0$. Taylor's formula for $-\log t$ around $1$ gives
\[
    -\log t_j
    \le-(t_j-1)+\frac{(t_j-1)^2}{2(1-\delta)^2}.
\]
Multiply by $\hat x_j$ and sum. The linear term vanishes because $x$ and $\hat x$ both belong to the simplex. Hence
\begin{align*}
    \KL(\hat x\Vert x)
    &\le
    \frac{1}{2(1-\delta)^2}
    \sum_j\frac{(x_j-\hat x_j)^2}{\hat x_j}\\
    &\le
    \frac{1}{2(1-\delta)^2\alpha}\norm{x-\hat x}_2^2.
\end{align*}
Since $x-\hat x\in T_\Delta$ and $A\hat x=b$,
\[
    \norm{Ax-b}_2^2
    =\norm{A(x-\hat x)}_2^2
    \ge\sigma_\Delta(A)^2\norm{x-\hat x}_2^2.
\]
Combining the two estimates proves \eqref{eq:local-simplex}.

\section{Exact one-row Bregman projections}
\label{app:exact-bregman-projection}
The relaxed selector update is closely related to the exact Bregman projection onto one row hyperplane. This appendix makes the relation explicit.

Let \textcolor{black}{$H_i=\{x\in\RR^n:a_i^\top x=b_i\}$}, and let $u_k\in\partial f(x_k)$. The Bregman projection of $x_k$ onto $H_i$ is
\begin{equation}
\label{eq:exact-bregman-projection}
    x_{k+1}\in\argmin_{x\in H_i}D_f^{u_k}(x_k,x).
\end{equation}
The terms independent of $x$ may be removed. The optimality condition gives a scalar multiplier $t_k$ such that
\[
    u_{k+1}=u_k-t_ka_i,
    \qquad
    x_{k+1}=\nabla f^*(u_{k+1}),
\]
and $t_k$ solves
\begin{equation}
\label{eq:scalar-bregman-root}
    a_i^\top\nabla f^*(u_k-t_ka_i)=b_i.
\end{equation}
Thus an exact row projection requires only a one-dimensional root problem when the mirror map is available.

In Euclidean geometry, $\nabla f^*(u)=u$ and \eqref{eq:scalar-bregman-root} has the closed form \textcolor{black}{\mbox{$t_k=\tfrac{a_i^\top x_k-b_i}{\norm{a_i}_2^2}$}}.
The exact Bregman projection then coincides with the classical Kaczmarz step. For a general geometry, Algorithm~\ref{alg:hard} replaces the exact scalar $t_k$ by \textcolor{black}{$\eta_k\tfrac{a_i^\top x_k-b_i}{\norm{a_i}_2^2}$}.
This avoids a root solve and gives the relaxed contraction in Theorem~\ref{thm:selector-contraction}.

This derivation also explains the overlap between the two branches. In the Euclidean consistent case, the primal projected row step and the dual selector step are the same update. Outside this case, they have different meanings. In a soft inconsistent problem, the row hyperplane is not a constraint of the target problem. In a non-Euclidean selector, the dual variable is needed to preserve the selected Bregman geometry.

\section{Derivation of the simplex-NMF row and mini-batch updates}
\label{app:nmf-minibatch-derivation}

We derive the updates used in Section~\ref{subsec:nmf}. The derivation is the same for a single row or column and for a sampled block. The only difference is the local gradient and its normalization.

\subsection{Abundance block}

Fix $W$ and let $I\subset\{1,\ldots,m\}$ be a sampled set of spectral rows. Define the local quadratic loss \textcolor{black}{$\phi_I(H)=\tfrac12\norm{W_{I,:}H-X_{I,:}}_F^2$}.
Its gradient is \textcolor{black}{\mbox{$\nabla\phi_I(H)=W_{I,:}^\top(W_{I,:}H-X_{I,:})=G_I^H$}}.
The Euclidean gradient of $\phi_I$ is Lipschitz with constant \textcolor{black}{\mbox{$L_I^H=\norm{W_{I,:}}_2^2$}}.
On the product of simplices, use the negative entropy
\textcolor{black}{$\mathcal H_\Delta(H)=\sum_{j=1}^n\sum_{\ell=1}^r H_{\ell j}\log H_{\ell j}$}.
For a current positive iterate $H$, the normalized entropic proximal step is
\begin{equation}
\label{eq:app-H-prox}
    H^+
    \in
    \argmin_{\substack{Z\ge0\\ \one^\top Z=\one^\top}}
    \left\{
        \frac{\eta_H}{L_I^H}\ip{G_I^H}{Z}
        +
        D_{\mathcal H_\Delta}(Z,H)
    \right\}.
\end{equation}
The Frobenius inner product is used in \eqref{eq:app-H-prox}. Since the columns are independent, consider one column $z_j$. Introducing a multiplier $\lambda_j$ for $\one^\top z_j=1$, the optimality condition is \textcolor{black}{$\log\tfrac{(z_j)_\ell}{(h_j)_\ell}+\tfrac{\eta_H}{L_I^H}(G_I^H)_{\ell j}+\lambda_j=0$}.
Hence \textcolor{black}{$(z_j)_\ell=(h_j)_\ell\exp\left(-\tfrac{\eta_H}{L_I^H}(G_I^H)_{\ell j}\right)e^{-\lambda_j}$}.
The multiplier is determined by $\one^\top z_j=1$. Therefore
\begin{equation}
\label{eq:app-H-closed-form}
    H^+
    =
    \operatorname{NormalizeColumns}
    \left(
        H\odot
        \exp\left(
            -\frac{\eta_H}{L_I^H}G_I^H
        \right)
    \right).
\end{equation}

For a singleton $I=\{i\}$, \textcolor{black}{we have $G_{\{i\}}^H=w_i(w_i^\top H-X_{i,:})$ and $L_{\{i\}}^H=\norm{w_i}_2^2$}.
Thus \eqref{eq:app-H-closed-form} reduces exactly to the single-row update \eqref{eq:H-update}.

\subsection{Basis block}

Fix $H$ and let $J\subset\{1,\ldots,n\}$ be a sampled set of columns. Define \textcolor{black}{$\psi_J(W)=\tfrac12\norm{WH_{:,J}-X_{:,J}}_F^2$}.

Its gradient is \textcolor{black}{\mbox{$\nabla\psi_J(W)=(WH_{:,J}-X_{:,J})H_{:,J}^\top=G_J^W$}},
with Euclidean Lipschitz constant \textcolor{black}{\mbox{$L_J^W=\norm{H_{:,J}}_2^2$}}.

On the positive orthant, use \textcolor{black}{$\mathcal H_+(W)=\sum_{a=1}^m\sum_{\ell=1}^r W_{a\ell}(\log W_{a\ell}-1)$}.
The entropic proximal step is
\begin{equation}
\label{eq:app-W-prox}
    W^+
    \in
    \argmin_{Z\ge0}
    \left\{
        \frac{\eta_W}{L_J^W}\ip{G_J^W}{Z}
        +
        D_{\mathcal H_+}(Z,W)
    \right\}.
\end{equation}
The entrywise optimality condition is \textcolor{black}{$\log\tfrac{Z_{a\ell}}{W_{a\ell}}+\tfrac{\eta_W}{L_J^W}(G_J^W)_{a\ell}=0$}, which gives
\begin{equation}
\label{eq:app-W-closed-form}
    W^+
    =
    W\odot
    \exp\left(
        -\frac{\eta_W}{L_J^W}G_J^W
    \right).
\end{equation}
In the implementation, entries smaller than a fixed machine-level threshold are replaced by that threshold to avoid underflow to exact zero.

For a singleton $J=\{j\}$, \textcolor{black}{we have $G_{\{j\}}^W=(Wh_j-x_j)h_j^\top$ and $L_{\{j\}}^W=\norm{h_j}_2^2$}.
Hence \eqref{eq:app-W-closed-form} reduces to the single-column update \eqref{eq:W-update}.

\subsection{Single-row and mini-batch sweeps}

For batch size $q$, one abundance sweep performs \textcolor{black}{$\left\lceil\tfrac{m}{q}\right\rceil$} independent block updates, and one basis sweep performs \textcolor{black}{$\left\lceil\tfrac{n}{q}\right\rceil$} independent block updates. Sampling is without replacement inside each sampled block, while different blocks are drawn independently. For $q=1$, the block update formulas reduce to the single-row and single-column updates, although the sampling schedules need not coincide. With $q=8$, nearly the same total amount of data is accessed through fewer and denser matrix operations.

\vl{The normalization by $L_I^H$ and $L_J^W$ gives a blockwise scale that is unchanged by a common rescaling of the sampled block. The resulting formulas provide practical realizations of the primal entropic update for the two convex block subproblems. Since the complete alternating NMF problem is nonconvex, its convergence is not a direct consequence of the convex analysis in Section~\ref{sec:theory}.}

\section{Additional remarks on implementation}
\label{app:implementation}
This appendix records details that are useful for reproducing the experiments but are not needed for the theory.

\paragraph{Weighted averages}
For the NNLS experiment, the weighted iterate is accumulated online:
\[
    z_{k+1}=z_k+\eta_kx_k,
    \qquad
    s_{k+1}=s_k+\eta_k,
    \qquad
    \bar x_{k+1}=z_{k+1}/s_{k+1}.
\]
No iterates need to be stored. The objective is evaluated once per epoch.

\paragraph{Restricted singular value on the simplex}
Let $Q\in\RR^{n\times(n-1)}$ have orthonormal columns spanning $T_\Delta$. Then \textcolor{black}{\mbox{$\sigma_\Delta(A)=\sigma_{\min}(AQ)$}}.
In the experiment, $Q$ is obtained from a QR factorization of a basis of $\one^\perp$. The same construction is used to evaluate the two reference levels in Figure~\ref{fig:selector-ratio}.

\paragraph{Stable entropy updates}
For normalized softmax evaluations, subtracting the largest exponent leaves the result unchanged. In the HSI experiment, exponential arguments in the multiplicative updates are clipped to a bounded interval before exponentiation. The columns of $H$ are normalized to the simplex, and a small positive floor is applied to $W$ to avoid underflow to exact zero.

\paragraph{\hsiupdate{Implementation and timing of the HSI experiment}}
\hsiupdate{The single-row method uses compiled update kernels. For mini-batch $q=8$, the block spectral-norm constants are computed from eigenvalues of small Gram matrices, with independent eigenproblems evaluated in batches. The reported elapsed times include the solver and its diagnostic evaluations, but exclude the one-time compilation of the single-row kernels.}

\paragraph{Stopping and monitoring}
The synthetic tests use a prescribed number of epochs because the purpose is to show the dependence on the horizon and the local contraction. The NMF experiment monitors the relative error \textcolor{black}{$\tfrac{\norm{X-WH}_F}{\norm{X}_F}$} after every outer iteration. The factors are initialized with positive random entries; the columns of $H$ are normalized to the simplex before the first update.



\bibliographystyle{siamplain}
\bibliography{row_action_refs_simax}

\end{document}